    \def\qed{\hfill$\sqcap\kern-8.0pt\hbox{$\sqcup$}$\\}
    \def\beq{\begin{eqnarray}}
    \def\eeq{\end{eqnarray}}
    \def\beqq{\begin{eqnarray*}}
    \def\eeqq{\end{eqnarray*}}
\DeclareMathOperator{\re}{Re}
\DeclareMathOperator{\im}{Im}
    \def\p{{\mathbb P}}
    \def\e{{\mathbb E}}
    \def\r{{\mathbb R}}
    \def\C{{\mathbb C}} 
    \def\d{{\textnormal d}}
    \def\i{{\textnormal i}}
\newtheorem{theorem}{Theorem}
\newtheorem{lemma}{Lemma}
\newtheorem{corollary}{Corollary}
\theoremstyle{definition}
\newtheorem{remark}{Remark}
\title{On free stable distributions}
\author{
{
T. Hasebe
\footnote{Department of Mathematics,  Hokkaido University, Kita 10, Nishi 8, Kitaku, Sapporo 060-0810, Japan. Email: thasebe@math.sci.hokudai.ac.jp}}
,\, 
{
A. Kuznetsov
\footnote{Department of Mathematics and Statistics, 
York University, 
4700 Keele Street, 
Toronto, Ontario, 
M3J 1P3, Canada. Email: akuznets@yorku.ca}}
}
\date{\footnotesize This version: August 18, 2014}
\begin{document}

\maketitle

\begin{abstract}
\bigskip
We investigate analytical properties of free stable distributions and discover many connections with their  classical counterparts. Our main result is an explicit formula for the Mellin transform, which leads to explicit series representations for the characteristic function and for the density of a free stable distribution. All of these formulas bear close resemblance to the corresponding expressions for classical stable distributions. As further applications of our results, we give an alternative 
proof of the  duality law due to Biane and a new factorization of a classical stable random variable into an independent 
(in the classical sense) product of a free stable random variable and a power of a Gamma(2) random variable.
\end{abstract}

{\vskip 0.5cm}
 \noindent {\it Keywords}: free stable distribution, stable distribution, Mellin transform 
{\vskip 0.5cm}
 \noindent {\it 2010 Mathematics Subject Classification }: 46L54, 60E07

\section{Introduction and main results}\label{section_intro}

The class ${\mathfrak{W}}^*$ of classical (strictly) stable distributions  is characterized by the following: $\mu \in {\mathfrak{W}}^*$ if and only if for any $c_1>0$ and $c_2>0$ there exists $c_3>0$ such that
\begin{equation}\label{def_stable}
D_{c_1} \mu * D_{c_2} \mu = D_{c_3} \mu,
\end{equation}
where $D_c \mu$ denotes the dilation by $c$ (in other words, $D_c \mu(B)=\mu(c^{-1} B)$ for all Borel sets $B$ in $\r$), and the binary operator ``$*$'' denotes the classical convolution 
\cite{Zolotarev1986}. 
Similarly, the class ${\mathfrak{W}}^{\boxplus}$ of free (strictly) stable distributions   is characterized by the scaling property \eqref{def_stable}, where the classical convolution ``$*$'' is replaced by the free convolution ``$\boxplus$'', see \cite{BPB1999,Pata_1995}. The similarities end at this  stage, and in all other respects these two families of distributions seem to be quite different. For example, the main tool for working with a classical stable distribution is the characteristic function, whereas the free stable distributions are described by their Cauchy transforms. Another difference is that the classical stable distributions have explicit formulas for the Mellin transform and their densities have explicit series expansions (see Sections 2.4, 2.5 and Theorem 2.6.3 in \cite{Zolotarev1986}), whereas the density of a free stable distribution enjoys a representation as an inverse of a rather simple explicit function (see Propositions A.1.2-A.1.4 in \cite{BPB1999}). The latter result does not have an analogue in the case of the classical stable distributions. 

At the same time, several results in the recent papers by Demni \cite{D2011}  and Haagerup and M\"oller \cite{HM2013} have offered 
glimpses of possible deeper similarities and connections between these two families of stable distributions. Demni \cite{D2011} has investigated the appearance of the Kanter random variable both in the classical and free stable distributions, while the Mellin transform of the positive free stable distributions, which was computed by Haagerup and M\"oller in \cite{HM2013}, bears close resemblance to the Mellin transform of the classical stable distributions, see \cite[Theorem 2.6.3]{Zolotarev1986}.

Our main goal in this paper is to investigate analytical properties of the free stable distributions, and to demonstrate close connections with their  classical counterparts. First, let us introduce several notations and definitions and present a new parameterization of the free stable distributions. 

It is known that (up to a scaling parameter) any strictly stable distribution can be uniquely characterized by a pair of parameters 
$(\alpha,\rho)$, which belongs to the {\it set of admissible parameters}
\begin{equation}\label{def_set_A}
{\mathcal A}:=\{\alpha \in (0,1), \; \rho \in [0,1]\} \cup 
\{\alpha\in(1,2], \; \rho \in [1-\alpha^{-1}, \alpha^{-1}]\}. 
\end{equation}
The exceptional case $\alpha=1$ corresponds to a Cauchy distribution, and from now on we exclude this case from consideration. 
The characteristic function of a distribution $\mu_{\alpha,\rho} \in {\mathfrak{W}}^*$ is given by
\begin{equation}\label{def_g_alpha_rho}
{\mathfrak{g}}_{\alpha,\rho}(z):=\int_{\r} e^{\i zx} \mu_{\alpha,\rho}(\d x)=
\exp\left(- z^{\alpha} e^{\pi \i \alpha (1/2-\rho)} \right), \;\;\; z>0. 
\end{equation}

Similarly, a free stable distribution can be parameterized by the pair $(\alpha,\tilde \rho)$, belonging to the set
\begin{equation}\label{def_set_admissible_old}
{\mathcal B}:=\{\alpha \in (0,1)\cup (1,2], \; \tilde\rho \in [0,1]\}.
\end{equation}
A free stable distribution $\nu_{\alpha,\tilde \rho} \in {\mathfrak{W}}^{\boxplus}$ is characterized by the {\it Voiculescu transform}
\begin{equation}\label{def_phi_Voiculescu}
\phi_{\alpha,\tilde \rho}(z)=
\begin{cases}
-e^{\pi \i \alpha \tilde \rho } z^{-\alpha+1}, \; & \textnormal{ if } \; \alpha \in (0,1), \\
e^{\pi \i (\alpha-2) \tilde \rho} z^{-\alpha+1}, \; & \textnormal{ if } \; \alpha \in (1,2],
\end{cases}
\end{equation}
where $\im(z)>0$.  In the above formula and everywhere else in this paper we use the principal branch of the logarithm  and of the power function. As in the classical case, we do not consider the free stable laws with $\alpha=1$, as these correspond to the Cauchy distributions. 
The Voiculescu transform defines the measure $\nu_{\alpha,\tilde \rho}$ in the following way: 
The inverse function of the map $z\mapsto 1/z+\phi_{\alpha,\tilde\rho}(1/z)$ 
is $G_{\alpha,\rho}(z)$, which is the Cauchy transform 
\begin{equation}\label{def_G_alpha_rho}
G_{\alpha,\tilde \rho}(z):=\int_{\r} \frac{\nu_{\alpha,\tilde \rho}(\d x)}{z-x}, \;\;\; \im(z)>0. 
\end{equation}

The parameterization $(\alpha,\tilde \rho)$ which is used to define the Voiculescu transform via 
\eqref{def_phi_Voiculescu} goes back to the seminal work of Bercovici, Pata and Biane \cite{BPB1999} (it also appears implicitly in \cite[Theorem 7.5]{BV1993}). 
In order to present our formulas in a more compact way and to highlight close connections with the classical stable distributions, we need to introduce a new parameterization for the class of
free stable distributions. Instead of using the parameters $(\alpha,\tilde \rho) \in {\mathcal B}$, we will use the pair
\begin{eqnarray}\label{new_parameterization}
(\alpha,\rho)=
\begin{cases}
(\alpha,\tilde \rho), \;\;\; & \textnormal{ if } \; \alpha \in (0,1), \; \tilde \rho \in [0,1], \\
\left(\alpha,\left(1-(2-\alpha)\tilde \rho\right)/{\alpha} \right), 
\;\;\; & \textnormal{ if } \; \alpha \in (1,2], \; \tilde \rho \in [0,1].
\end{cases}
\end{eqnarray}
Note that the map $(\alpha,\tilde\rho) \mapsto (\alpha,\rho)$ is a bijection between the set 
${\mathcal{B}}$ and the set of admissible parameters ${\mathcal A}$.
The benefit of using this new parameterization $(\alpha,\rho) \in {\mathcal A}$ is that the Voiculescu transform of a distribution $\nu_{\alpha,\rho} \in {\mathfrak{W}}^{\boxplus}$ is given by a single expression
\begin{equation}\label{def_phi_Voiculescu_new}
\phi_{\alpha,\rho}(z)=-e^{\pi \i \alpha \rho} z^{-\alpha+1}, 
\end{equation}
whereas the original parameterization \eqref{def_phi_Voiculescu} requires two different formulas, depending on whether $\alpha \in (0,1)$ or $\alpha \in (1,2]$. 
Another advantage of our new parameterization is that the map $\mu_{\alpha,\rho} \in {\mathfrak{W}}^* \mapsto 
\nu_{\alpha,\rho} \in {\mathfrak{W}}^{\boxplus}$ is just the Bercovici-Pata bijection \cite{BPB1999} (up to a scaling parameter); this fact  
can be easily established using Theorem 4.1 in \cite{BN_TH}.

For $(\alpha,\rho) \in {\mathcal A}$ we denote by $X_{\alpha,\rho}$ (respectively, $Y_{\alpha,\rho}$) an $\r$-valued random variable having distribution $\nu_{\alpha,\rho} \in {\mathfrak{W}}^{\boxplus}$ (respectively, $\mu_{\alpha,\rho} \in {\mathfrak{W}}^*$).

Now we are ready to present our results. Their proofs are provided in the next section. 

 \begin{theorem}\label{thm_Mellin}
Assume that $(\alpha,\rho) \in {\mathcal A}$. Then for $s\in (-1,\alpha)$
\begin{equation}\label{eq_Mellin_1} 
\e\left[(X_{\alpha,\rho})^s {\bf 1}_{\{X_{\alpha,\rho}>0\}}\right]=\frac{1}{\pi}\sin(\pi \rho s)\frac{\Gamma(s)\Gamma\left(1-s/{\alpha}\right)}{\Gamma\left(2+s-s/\alpha\right)}. 
\end{equation} 
\end{theorem}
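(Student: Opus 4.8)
The natural route is through the Cauchy transform. By the Stieltjes inversion formula the density of $X_{\alpha,\rho}$ on the interior of $\mathrm{supp}(\nu_{\alpha,\rho})$ equals $-\pi^{-1}\im G_{\alpha,\rho}(x+\i 0)$, so that
\[
\e\left[(X_{\alpha,\rho})^s {\bf 1}_{\{X_{\alpha,\rho}>0\}}\right]=-\frac{1}{\pi}\,\im\int_0^\infty x^s\, G_{\alpha,\rho}(x+\i 0)\,\d x .
\]
Since the density of $\nu_{\alpha,\rho}$ is bounded near the origin and decays like $x^{-1-\alpha}$ at $+\infty$ (the classical heavy-tail behaviour, also read off from the expansion $G_{\alpha,\rho}(z)=1/z-e^{\pi\i\alpha\rho}z^{-1-\alpha}+\cdots$ at infinity), this integral converges absolutely exactly when $s\in(-1,\alpha)$. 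Both sides of \eqref{eq_Mellin_1} are analytic in $s$ on the strip $\{-1<\re(s)<\alpha\}$ (the pole of $\Gamma(s)$ at $s=0$ being killed by $\sin(\pi\rho s)$), so it suffices to establish the identity for $s\in(-1,0)$, which I assume henceforth; the dividend is that the Euler integral obtained below then converges in the ordinary sense.

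The heart of the proof is the change of variables $x=K(w)$, where $K(w):=1/w+\phi_{\alpha,\rho}(1/w)=1/w-e^{\pi\i\alpha\rho}w^{\alpha-1}$ is, by definition, the functional inverse of $G_{\alpha,\rho}$. Thus $w=G_{\alpha,\rho}(x+\i 0)$, $\d x=K'(w)\,\d w$, and as $x$ runs over $(0,\infty)$ the point $w$ traces a path in the closed lower half-plane. Introducing $t:=e^{\pi\i\alpha\rho}w^{\alpha}$ (on the principal branch, which is legitimate since $\alpha\rho\in[0,1]$ for every $(\alpha,\rho)\in{\mathcal A}$) one checks the identities
\[
K(w)=\frac{1-t}{w},\qquad w\,K'(w)=-\frac{1+(\alpha-1)t}{w},\qquad w^{-s-1}\,\d w=\frac{e^{\pi\i\rho s}}{\alpha}\,t^{-\frac s\alpha-1}\,\d t,
\]
from which $x^s\,G_{\alpha,\rho}(x+\i 0)\,\d x=K(w)^s\,w\,K'(w)\,\d w=-\alpha^{-1}e^{\pi\i\rho s}\,t^{-\frac s\alpha-1}(1-t)^s(1+(\alpha-1)t)\,\d t$. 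The endpoints $x=0$ and $x=\infty$ correspond to $t=1$ and $t=0$ respectively, and the $t$-path joining them lies, away from its endpoints, in the simply connected slit domain $\C\setminus\bigl((-\infty,0]\cup[1,\infty)\bigr)$, so by Cauchy's theorem it may be collapsed onto the segment $[0,1]$, giving
\[
\int_0^\infty x^s\, G_{\alpha,\rho}(x+\i 0)\,\d x=\frac{e^{\pi\i\rho s}}{\alpha}\int_0^1 t^{-\frac s\alpha-1}(1-t)^s(1+(\alpha-1)t)\,\d t .
\]
Now $1+(\alpha-1)t=\alpha-(\alpha-1)(1-t)$, so Euler's Beta integral expresses the right-hand side as $e^{\pi\i\rho s}\bigl[\b(-\tfrac s\alpha,1+s)-\tfrac{\alpha-1}{\alpha}\b(-\tfrac s\alpha,2+s)\bigr]$, and one routine manipulation with $\Gamma(u+1)=u\Gamma(u)$ collapses the bracket to $\alpha^{-1}\Gamma(-\tfrac s\alpha)\Gamma(1+s)/\Gamma(2+s-\tfrac s\alpha)=-\Gamma(s)\Gamma(1-\tfrac s\alpha)/\Gamma(2+s-\tfrac s\alpha)$. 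Hence $\int_0^\infty x^s\, G_{\alpha,\rho}(x+\i 0)\,\d x=-e^{\pi\i\rho s}\,\Gamma(s)\Gamma(1-\tfrac s\alpha)/\Gamma(2+s-\tfrac s\alpha)$; the quantity multiplying $e^{\pi\i\rho s}$ is real, so applying $-\pi^{-1}\im(\,\cdot\,)$ turns $e^{\pi\i\rho s}$ into $\sin(\pi\rho s)$ and produces exactly \eqref{eq_Mellin_1}.

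The computation just sketched is short; the real work, and the step I expect to be the main obstacle, is making the manipulations rigorous. One must control the boundary behaviour of $G_{\alpha,\rho}$ at $x=0$, at $x=\infty$ and along $\mathrm{supp}(\nu_{\alpha,\rho})$, justify that $x\mapsto G_{\alpha,\rho}(x+\i 0)$ is a bona fide change of variables on $(0,\infty)$ (it is not a single conformal chart, since $G_{\alpha,\rho}$ jumps across the support), and check that the resulting $w$- and $t$-paths remain, up to corrections that cancel, in the domains where the branch identifications above are valid -- and the qualitative type of $\mathrm{supp}(\nu_{\alpha,\rho})$ (a compact interval, a half-line, or all of $\r$; meeting $0$ in its interior, at an endpoint, or not at all) varies with $(\alpha,\rho)$. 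Two observations tame this. First, one can prove \eqref{eq_Mellin_1} initially for those $(\alpha,\rho)$ where $\nu_{\alpha,\rho}$ is supported on a half-line -- where the geometry is cleanest and the computation can be cross-checked against that of Haagerup and M\"oller \cite{HM2013} -- and then propagate it by analyticity: for fixed $\alpha$ and $s$ the right-hand side of \eqref{eq_Mellin_1} is entire in $\rho$, while $\phi_{\alpha,\rho}$, hence $G_{\alpha,\rho}$, hence the left-hand side, depends analytically on $\rho$, so equality on a subinterval forces equality on all of ${\mathcal A}$. Second, the overall normalization and sign are pinned down independently by letting $s\to 0$ in \eqref{eq_Mellin_1} (with $\Gamma(s)\sim 1/s$ and $\sin(\pi\rho s)\sim\pi\rho s$), which must reduce to $\p(X_{\alpha,\rho}>0)=\rho$; this simultaneously identifies $\rho$ as the positivity parameter of $\nu_{\alpha,\rho}$.
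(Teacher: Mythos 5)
Your proposal is correct and follows essentially the same route as the paper: Stieltjes inversion, the change of variables $w=G_{\alpha,\rho}(x)$ for $s\in(-1,0)$, reduction to Euler Beta integrals by deforming the image contour onto $[0,1]$, taking the imaginary part to produce $\sin(\pi\rho s)$, and analytic continuation in $s$ to all of $(-1,\alpha)$ (your substitution $t=e^{\pi\i\alpha\rho}w^{\alpha}$ merely merges the paper's two substitutions $w=uz^*$, $u=x^{1/\alpha}$ and keeps the two Beta integrals together). The boundary-behaviour issues you flag as the main obstacle are resolved in the paper simply by invoking Lemma A1.1 of \cite{BPB1999} ($G_{\alpha,\rho}$ is a conformal bijection of $\C^+$ onto the Jordan domain bounded by $\gamma_{\alpha,\rho}$, extending to a homeomorphism of the closures), which identifies $G_{\alpha,\rho}((0,\infty))$ as an explicit curve from $e^{-\pi\i\rho}$ to $0$ in the closed lower half-plane and makes your change of variables and contour deformation rigorous directly, with no need for the fallback argument via analyticity in $\rho$.
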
 

\vspace{0.25cm}
\noindent
The above expression for the Mellin transform of $X_{\alpha,\rho}$   is remarkably similar to the corresponding result for the classical stable distributions  \cite[Theorem 2.6.3]{Zolotarev1986}:
\begin{equation}\label{eq_Mellin_1_classical} 
\e\left[(Y_{\alpha,\rho})^s {\bf 1}_{\{Y_{\alpha,\rho}>0\}}\right]=\frac{1}{\pi}\sin(\pi \rho s)\Gamma(s)\Gamma\left(1-{s}/{\alpha}\right),  \;\;\; \textnormal{ for } \; -1<s<\alpha. 
\end{equation}

\begin{remark}
Using formula \eqref{eq_Mellin_1} it is easy to prove that, for any $\rho \in [0,1]$,
the random variable $\vert X_{\alpha,\rho} \vert^{-\alpha}$ converges weakly as $\alpha \to 0^+$ to a uniform random variable on the interval $(0,1)$. 
This limiting behavior is different than in the case of the classical stable distributions, where $\vert Y_{\alpha,\rho} 
\vert^{-\alpha}$ converge to an exponential random variable as $\alpha \to 0^+$ \cite{Cressie}. 
The limiting distribution of $(X_{\alpha,1})^{\alpha}$ as $\alpha \to 0^+$ was investigated in \cite{D2011}. However,
the expression given in \cite[Proposition 3]{D2011} is incorrect, as the right-hand side of that formula should be $1/x^2$, instead of $1/x$. 
\end{remark}

\noindent
The following corollary follows easily from Theorem \ref{thm_Mellin}, by taking the limit $s \to 0$ in \eqref{eq_Mellin_1}.
\begin{corollary}\label{corollary_positivity}
Assume that $(\alpha,\rho) \in {\mathcal A}$. Then $\p(X_{\alpha,\rho}>0)=\rho$. 
\end{corollary}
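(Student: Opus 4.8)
The plan is to obtain the corollary directly from Theorem \ref{thm_Mellin} by letting $s\to 0$ along the reals in the identity \eqref{eq_Mellin_1}. The right-hand side is handled by elementary asymptotics: rewriting
\[
\frac{1}{\pi}\sin(\pi\rho s)\Gamma(s)=\frac{1}{\pi}\cdot\frac{\sin(\pi\rho s)}{s}\cdot\Gamma(1+s),
\]
and using $\sin(\pi\rho s)/s\to\pi\rho$ together with the continuity of $\Gamma$ at $1$ and $2$, the right-hand side of \eqref{eq_Mellin_1} converges to $\rho\,\Gamma(1)/\Gamma(2)=\rho$ as $s\to0$.

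It then remains to show that the left-hand side of \eqref{eq_Mellin_1} converges to $\p(X_{\alpha,\rho}>0)$. I would fix $\varepsilon\in(0,\min(1,\alpha))$ and note that for every real $s$ with $|s|\le\varepsilon$ one has the pointwise bound $x^s{\bf 1}_{\{x>0\}}\le(x^{\varepsilon}+x^{-\varepsilon}){\bf 1}_{\{x>0\}}$, since $x^s\le x^{\varepsilon}$ when $x\ge1$ and $x^s\le x^{-\varepsilon}$ when $0<x<1$. Because both $\varepsilon$ and $-\varepsilon$ lie in $(-1,\alpha)$, Theorem \ref{thm_Mellin} guarantees that $\e[(X_{\alpha,\rho})^{\varepsilon}{\bf 1}_{\{X_{\alpha,\rho}>0\}}]$ and $\e[(X_{\alpha,\rho})^{-\varepsilon}{\bf 1}_{\{X_{\alpha,\rho}>0\}}]$ are finite, so the right-hand side of the bound is an integrable majorant. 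Since $(X_{\alpha,\rho})^s{\bf 1}_{\{X_{\alpha,\rho}>0\}}\to{\bf 1}_{\{X_{\alpha,\rho}>0\}}$ almost surely as $s\to0$, the dominated convergence theorem gives $\e[(X_{\alpha,\rho})^s{\bf 1}_{\{X_{\alpha,\rho}>0\}}]\to\p(X_{\alpha,\rho}>0)$, and equating the two limits yields the claim.

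There is essentially no obstacle here, as the corollary is merely a limiting case of the already established Theorem \ref{thm_Mellin}. The only point that needs attention is the interchange of limit and expectation on the left-hand side, and this is settled by the dominated convergence argument above once one observes that the interval $(-1,\alpha)$ of validity of the Mellin transform contains a symmetric neighbourhood of the origin, which supplies the required integrable majorant.
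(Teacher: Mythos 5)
Your argument is correct and is exactly the route the paper takes: the corollary is obtained by letting $s\to 0$ in \eqref{eq_Mellin_1}, with the right-hand side tending to $\rho$ and the left-hand side tending to $\p(X_{\alpha,\rho}>0)$. The paper leaves the limit interchange implicit, and your dominated convergence argument with the majorant $x^{\varepsilon}+x^{-\varepsilon}$ (justified by finiteness of the Mellin transform at $\pm\varepsilon\in(-1,\alpha)$) supplies precisely the missing detail.
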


The above result clearly demonstrates the benefit of our new parameterization \eqref{new_parameterization} for free stable distributions. The parameter $\rho$ (unlike $\tilde\rho$) has a very natural interpretation as the positivity parameter, which is consistent with its definition for the classical stable random distributions. 

Theorem \ref{thm_Mellin} can be used to show that a free stable distribution $\nu_{\alpha,\rho}$ is absolutely continuous, with a smooth density $\psi_{\alpha,\rho}(x)$ (this fact was first established in \cite{BPB1999}). The following duality result was obtained by Biane in \cite{BPB1999}, and in this paper we give a new derivation of this result as a simple corollary of Theorem \ref{thm_Mellin}. We would like to emphasize that the same duality relation also holds for densities of classical stable distributions.

 \begin{corollary}\label{corollary_duality}
Assume that $\alpha \ge {1}/{2}$ and $(\alpha,\rho) \in {\mathcal A}$. Then for $x>0$
 \begin{equation}\label{duality_identity1}
 \psi_{\alpha,\rho}(x)=x^{-\alpha-1} \psi_{{1}/{\alpha},\,\alpha\rho}(x^{-\alpha}).
 \end{equation}
\end{corollary}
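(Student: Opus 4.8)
The plan is to derive the duality identity \eqref{duality_identity1} purely at the level of Mellin transforms, using Theorem \ref{thm_Mellin} together with the fact (also a consequence of that theorem, as noted in the text) that $\nu_{\alpha,\rho}$ is absolutely continuous with smooth density $\psi_{\alpha,\rho}$. The key observation is that the Mellin transform \eqref{eq_Mellin_1} determines the restriction of $\psi_{\alpha,\rho}$ to $(0,\infty)$ uniquely, so it suffices to check that the two sides of \eqref{duality_identity1} have the same Mellin transform on a common strip.

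First I would write
\begin{equation*}
\int_0^{\infty} x^s \psi_{\alpha,\rho}(x) \, \d x = \e\left[(X_{\alpha,\rho})^s {\bf 1}_{\{X_{\alpha,\rho}>0\}}\right] = \frac{1}{\pi}\sin(\pi \rho s)\frac{\Gamma(s)\Gamma\left(1-\frac{s}{\alpha}\right)}{\Gamma\left(2+s-\frac{s}{\alpha}\right)}, \qquad s \in (-1,\alpha),
\end{equation*}
and then compute the Mellin transform of the right-hand side $x \mapsto x^{-\alpha-1}\psi_{1/\alpha,\alpha\rho}(x^{-\alpha})$ of \eqref{duality_identity1} by the substitution $u = x^{-\alpha}$, which gives $\d x = -\frac{1}{\alpha} u^{-1-1/\alpha}\,\d u$ and turns the integral into $\frac{1}{\alpha}\int_0^\infty u^{-\frac{s}{\alpha} - \frac{1}{\alpha} + \frac1\alpha + \frac1\alpha - 1}\psi_{1/\alpha,\alpha\rho}(u)\,\d u$; after simplifying the exponent this is $\frac{1}{\alpha}$ times the Mellin transform of $\psi_{1/\alpha,\alpha\rho}$ evaluated at the point $t = -1 + \frac{1-s}{\alpha}$. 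Applying Theorem \ref{thm_Mellin} with parameters $(1/\alpha, \alpha\rho)$ at argument $t$ and simplifying the resulting product of sine and Gamma factors — using $\sin(\pi \cdot \alpha\rho \cdot t) = \sin\big(\pi\rho(1-s) - \pi\rho\alpha\big)$ type reductions and the reflection/multiplication formulas for $\Gamma$ — should reproduce exactly the right-hand side displayed above. The constraint $\alpha \ge \frac12$ enters precisely to guarantee that $1/\alpha \le 2$, so that $(1/\alpha,\alpha\rho)$ is itself an admissible pair in ${\mathcal A}$ (one also checks $\alpha\rho \in [1 - \alpha, \alpha]$ translates correctly across \eqref{new_parameterization}), and one must track the overlap of the two strips of validity, $s \in (-1,\alpha)$ versus $t = -1 + (1-s)/\alpha \in (-1, 1/\alpha)$, which is a nonempty open interval.

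The main obstacle I anticipate is purely computational: carefully verifying that the trigonometric and Gamma-function bookkeeping closes up, i.e. that
\begin{equation*}
\frac{1}{\alpha}\cdot\frac{1}{\pi}\sin(\pi \alpha \rho t)\frac{\Gamma(t)\Gamma(1-\alpha t)}{\Gamma(2 + t - \alpha t)}\bigg|_{t = -1 + \frac{1-s}{\alpha}}
\end{equation*}
simplifies to $\frac{1}{\pi}\sin(\pi\rho s)\Gamma(s)\Gamma(1-s/\alpha)/\Gamma(2+s-s/\alpha)$. This requires the Gamma reflection formula $\Gamma(z)\Gamma(1-z) = \pi/\sin(\pi z)$ to convert some $\Gamma$ factors at negative or shifted arguments into sines that cancel against the numerator $\sin$, and the final answer should emerge after the dust settles. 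Once the Mellin transforms agree on an open strip, the Mellin inversion theorem (applicable because both densities are smooth and the transforms decay suitably along vertical lines, thanks to the Gamma factors) yields \eqref{duality_identity1} pointwise for $x > 0$, completing the proof.
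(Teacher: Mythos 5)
Your overall strategy is exactly the paper's: the published proof consists precisely of the observation that \eqref{eq_Mellin_1} gives $M_{\alpha,\rho}(-\alpha s)=\alpha^{-1}M_{\alpha^{-1},\alpha\rho}(s)$, which by a change of variables is the statement that the two sides of \eqref{duality_identity1} have the same Mellin transform on $(0,\infty)$, and one concludes by Mellin uniqueness. However, your execution contains a concrete slip that makes your key verification step false as written. With $u=x^{-\alpha}$ one has $x^{s}=u^{-s/\alpha}$, $x^{-\alpha-1}=u^{1+\frac{1}{\alpha}}$ and $\d x=\frac{1}{\alpha}u^{-\frac{1}{\alpha}-1}\,\d u$ after flipping the limits, so the total exponent is $-\frac{s}{\alpha}+1+\frac{1}{\alpha}-\frac{1}{\alpha}-1=-\frac{s}{\alpha}$; in the normalization \eqref{def_Mellin_transform} the Mellin transform of $x\mapsto x^{-\alpha-1}\psi_{\frac{1}{\alpha},\alpha\rho}(x^{-\alpha})$ is therefore $\frac{1}{\alpha}M_{\frac{1}{\alpha},\alpha\rho}\left(-\frac{s}{\alpha}\right)$, not $\frac{1}{\alpha}M_{\frac{1}{\alpha},\alpha\rho}\left(-1+\frac{1-s}{\alpha}\right)$. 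At your evaluation point $t=-1+\frac{1-s}{\alpha}$ the identity you propose to verify (that the displayed expression equals $\frac{1}{\pi}\sin(\pi\rho s)\Gamma(s)\Gamma(1-\frac{s}{\alpha})/\Gamma(2+s-\frac{s}{\alpha})$) is simply not true except when $\alpha=1$, so ``the dust'' would not settle --- and you left that computation unchecked, so the gap is real even though it is easily repaired. At the correct point $t=-\frac{s}{\alpha}$ the check is immediate and needs no reflection formula: $\sin(\pi\alpha\rho t)=-\sin(\pi\rho s)$, $\Gamma(1-\alpha t)=\Gamma(1+s)=s\,\Gamma(s)$, $\Gamma(t)=\Gamma(-\frac{s}{\alpha})=-\frac{\alpha}{s}\Gamma\left(1-\frac{s}{\alpha}\right)$, and $\Gamma(2+t-\alpha t)=\Gamma\left(2+s-\frac{s}{\alpha}\right)$, giving $\frac{1}{\alpha}M_{\frac{1}{\alpha},\alpha\rho}\left(-\frac{s}{\alpha}\right)=M_{\alpha,\rho}(s)$ on the strip $s\in(-1,\alpha)$ (equivalently $t\in(-1,\frac{1}{\alpha})$), which is the paper's identity verbatim.

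Two smaller cautions. First, your parenthetical claim that $\alpha\ge\frac{1}{2}$ automatically makes $(\frac{1}{\alpha},\alpha\rho)$ admissible is not quite a routine check: for $\alpha\in[\frac{1}{2},1)$ one needs $\alpha\rho\ge 1-\alpha$, i.e.\ $\rho\ge\frac{1}{\alpha}-1$, which does not hold for every $\rho\in[0,1]$; this is really a question of how the corollary's hypotheses are read (the case $\alpha\in(1,2]$ is unproblematic), but you should not assert it as verified. Second, your closing appeal to Mellin inversion is fine but heavier than needed; since both sides are finite positive measures' densities restricted to $(0,\infty)$, equality of the transforms on a vertical line in the common strip already gives equality of the measures on $(0,\infty)$, and continuity of the densities upgrades this to the pointwise identity \eqref{duality_identity1}.
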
 

\vspace{0.25cm}

We can also express the above duality law in terms of random variables. If $\xi$ is a random variable such that $\p(\xi>0)>0$, we denote by $\hat \xi$ the {\it cutoff} of $\xi$, which is a positive random variable, whose distribution is given by
$$
\p(\hat \xi \in A)=\p(\xi \in A \; | \; \xi >0),
$$
for all Borel sets $A \subseteq (0,\infty)$. 
 It is easy to see that the identity \eqref{duality_identity1} is equivalent to  
 \begin{equation}\label{duality_identity2}
\hat X_{\alpha,\rho}\stackrel{d}{=} \left(\hat X_{{1}/{\alpha},\,\alpha\rho}\right)^{-{1}/{\alpha}}. 
\end{equation}

Another corollary of Theorem \ref{thm_Mellin} is the following distributional identity between stable and free stable laws.

\begin{corollary}\label{corollary1}
Let $Z$ be a Gamma(2) random variable (that is, a positive random variable having the density $p_Z(x)=xe^{-x}$, for $x>0$).  
For $(\alpha,\rho) \in {\mathcal A}$ we have
\begin{equation}\label{idenity_distribution_01}
Y_{\alpha,\rho}\stackrel{d}{=}X_{\alpha,\rho} \times Z^{1-{1}/{\alpha}}, 
\end{equation}
where the random variables $X_{\alpha,\rho}$ and $Z$ are assumed to be independent. 
\end{corollary}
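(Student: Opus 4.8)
The statement is flagged in the text as a consequence of Theorem \ref{thm_Mellin}, so the natural plan is to compare Mellin transforms. The first step is to record the Mellin transform of the factor $Z^{1-\frac{1}{\alpha}}$: since $Z$ has density $xe^{-x}$ on $(0,\infty)$,
\[
\e\left[\left(Z^{1-\frac{1}{\alpha}}\right)^s\right]=\int_0^\infty x^{s(1-\frac{1}{\alpha})+1}e^{-x}\,\d x=\Gamma\!\left(2+s-\tfrac{s}{\alpha}\right),
\]
and one checks that $2+s-s/\alpha$ is an affine function of $s$ taking positive values at $s=-1$ and $s=\alpha$, hence on all of $[-1,\alpha]$, so this expectation is finite for every $s\in(-1,\alpha)$. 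The crucial observation is that $\Gamma(2+s-s/\alpha)$ is exactly the reciprocal of the extra factor that distinguishes the free Mellin transform \eqref{eq_Mellin_1} from the classical one \eqref{eq_Mellin_1_classical}.

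Next, set $W:=X_{\alpha,\rho}Z^{1-\frac{1}{\alpha}}$. Since $Z>0$ almost surely and $Z$ is independent of $X_{\alpha,\rho}$, the product $W$ has the same sign as $X_{\alpha,\rho}$, so for $s\in(-1,\alpha)$ we may factor the Mellin transform of the positive part:
\[
\e\left[W^s{\bf 1}_{\{W>0\}}\right]=\e\left[(X_{\alpha,\rho})^s{\bf 1}_{\{X_{\alpha,\rho}>0\}}\right]\,\e\left[\left(Z^{1-\frac{1}{\alpha}}\right)^s\right]=\frac{1}{\pi}\sin(\pi\rho s)\Gamma(s)\Gamma\!\left(1-\tfrac{s}{\alpha}\right),
\]
where the last equality combines Theorem \ref{thm_Mellin} with the computation above. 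By \eqref{eq_Mellin_1_classical} the right-hand side equals $\e[(Y_{\alpha,\rho})^s{\bf 1}_{\{Y_{\alpha,\rho}>0\}}]$, so $W$ and $Y_{\alpha,\rho}$ agree on their positive parts in the Mellin sense.

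To handle the negative parts I would invoke the reflection identities $-X_{\alpha,\rho}\stackrel{d}{=}X_{\alpha,1-\rho}$ and $-Y_{\alpha,\rho}\stackrel{d}{=}Y_{\alpha,1-\rho}$, both legitimate because $(\alpha,1-\rho)\in{\mathcal A}$ whenever $(\alpha,\rho)\in{\mathcal A}$. The classical one follows immediately from \eqref{def_g_alpha_rho} by conjugating the characteristic function; the free one follows from \eqref{def_phi_Voiculescu_new} by computing the Voiculescu transform $z\mapsto -\overline{\phi_{\alpha,\rho}(-\bar z)}$ of $-X_{\alpha,\rho}$ and simplifying with the principal branch. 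Applying the previous paragraph with $\rho$ replaced by $1-\rho$, together with $-W=(-X_{\alpha,\rho})Z^{1-\frac{1}{\alpha}}$, then yields $\e[(-W)^s{\bf 1}_{\{W<0\}}]=\e[(-Y_{\alpha,\rho})^s{\bf 1}_{\{Y_{\alpha,\rho}<0\}}]$ for $s\in(-1,\alpha)$. Since a probability measure on $\r$ is determined by the Mellin transforms of its restrictions to $(0,\infty)$ and to $(-\infty,0)$ as soon as these are finite on a common vertical strip (this is just injectivity of the Fourier transform after the substitution $x=e^{-u}$), we conclude $W\stackrel{d}{=}Y_{\alpha,\rho}$, which is \eqref{idenity_distribution_01}.

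The individual computations are routine; the part that needs care — and what I would regard as the main obstacle — is the two-sided bookkeeping: one must not drop the negative part, must verify that $(\alpha,1-\rho)$ is again admissible, must derive the reflection relation for the free family from the Voiculescu transform while tracking the branch cuts, and must confirm that the vertical strips on which the three relevant Mellin transforms converge actually overlap (they all contain $\re(s)\in(-1,\alpha)$).
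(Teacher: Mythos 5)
Your proposal is correct and follows essentially the same route as the paper: identify $\e[Z^{(1-\frac{1}{\alpha})s}]=\Gamma\left(2+s-\frac{s}{\alpha}\right)$ as the factor relating \eqref{eq_Mellin_1} to \eqref{eq_Mellin_1_classical}, conclude equality of the laws on $(0,\infty)$ by Mellin uniqueness, and transfer to the negative half-line via the symmetries $-X_{\alpha,\rho}\stackrel{d}{=}X_{\alpha,1-\rho}$ and $-Y_{\alpha,\rho}\stackrel{d}{=}Y_{\alpha,1-\rho}$. The extra care you take in justifying the reflection identities and the overlap of convergence strips is fine but not a departure from the paper's argument.
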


We recall that  {\it the Bercovici-Pata bijection} is an equivalence between the set of classical infinitely divisible
laws and the set of free infinitely divisible laws, which identifies the corresponding generating triplets in the free and classical L\'evy-Khintchine representations \cite{BN_TH,BPB1999}. Our factorization \eqref{idenity_distribution_01} provides a remarkably simple realization of this 
bijection in the case of stable distributions. Furthermore, this result leads naturally to the question of whether there exist other classes of infinitely divisible laws, for which the Bercovici-Pata bijection can be expressed as a similar factorization. 
 
The factorization identity \eqref{idenity_distribution_01} is also related to the following recent result by 
Arizmendi and P\'erez-Abreu \cite{AP2010}: 
\begin{equation}\label{AP}
Y \stackrel{d}{=} S_\theta\times \sqrt{Z_{\theta+2}}, 
\end{equation}
where $Y$, $S_\theta$ and $Z_{\theta+2}$ are independent random variables having the standard normal distribution;  the power semicircle distribution with the density $a_{\theta}(1-x^2)^{\theta+1/2}$, $x\in \r$; and the gamma distribution with the density $b_{\theta}x^{\theta+1}e^{-x/2}$, $x>0$, respectively. It is easy to see that our identity 
(\ref{idenity_distribution_01}) with $(\alpha,\rho)=(2,1/2)$, coincides with the result in (\ref{AP}) when $\theta=0$ (up to scaling).

Theorem \ref{thm_Mellin} also gives the following corollary. 

\begin{corollary}\label{corollary2}
Assume that $(\alpha,\rho_1)$ and $(\alpha,\rho_2)$ belong to ${\mathcal{A}}$. Then
\begin{equation}\label{identity_ratio}
\frac{X_{\alpha,\rho_1}}{X_{\alpha,\rho_2}} \stackrel{d}{=} \frac{X_{\alpha,\rho_2}}{X_{\alpha,\rho_1}}
\;\;\; \textnormal{ and } \;\;\;
\frac{\hat X_{\alpha,\rho_1}}{\hat X_{\alpha,\rho_2}} \stackrel{d}{=} \frac{\hat X_{\alpha,\rho_2}}{\hat X_{\alpha,\rho_1}},
\end{equation}
where all random variables are assumed to be independent, and in the second identity above we also assume that $\rho_1$ and $\rho_2$ are nonzero. 
\end{corollary}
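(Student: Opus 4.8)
The plan is to identify the laws in \eqref{identity_ratio} by comparing Mellin transforms. Put $C_\alpha(s):=\Gamma(s)\Gamma(1-\tfrac s\alpha)/\Gamma(2+s-\tfrac s\alpha)$, so that Theorem~\ref{thm_Mellin} reads
\[
M_\rho(s):=\e\!\left[X_{\alpha,\rho}^{\,s}{\bf 1}_{\{X_{\alpha,\rho}>0\}}\right]=\frac1\pi\sin(\pi\rho s)\,C_\alpha(s),\qquad \re(s)\in(-1,\alpha).
\]
I will also use the reflection identity $-X_{\alpha,\rho}\stackrel{d}{=}X_{\alpha,1-\rho}$ (note that $(\alpha,1-\rho)\in{\mathcal A}$), which is classical and also follows readily from \eqref{def_phi_Voiculescu_new}; together with Theorem~\ref{thm_Mellin} it gives $N_\rho(s):=\e[|X_{\alpha,\rho}|^{s}{\bf 1}_{\{X_{\alpha,\rho}<0\}}]=M_{1-\rho}(s)=\frac1\pi\sin(\pi(1-\rho)s)C_\alpha(s)$. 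Since $\nu_{\alpha,\rho}$ is absolutely continuous, $X_{\alpha,\rho}$ has no atom at $0$, so every ratio in \eqref{identity_ratio} is almost surely well defined and nonzero, and $M_\rho,N_\rho$ extend analytically to $\re(s)\in(-1,\alpha)$.

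I would begin with the second identity in \eqref{identity_ratio} (which tacitly requires $\rho_1,\rho_2>0$, so that the cutoffs are defined). By Corollary~\ref{corollary_positivity}, $\e[(\hat X_{\alpha,\rho})^s]=\rho^{-1}M_\rho(s)$, so for real $s$ with $0<|s|<\min(1,\alpha)$ independence gives
\[
\e\!\left[\left(\hat X_{\alpha,\rho_1}/\hat X_{\alpha,\rho_2}\right)^{s}\right]=\frac{M_{\rho_1}(s)\,M_{\rho_2}(-s)}{\rho_1\rho_2}=-\frac{C_\alpha(s)\,C_\alpha(-s)}{\pi^2\rho_1\rho_2}\,\sin(\pi\rho_1 s)\sin(\pi\rho_2 s),
\]
the apparent singularity at $s=0$ being removable. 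The last expression is symmetric under $\rho_1\leftrightarrow\rho_2$, so $\hat X_{\alpha,\rho_1}/\hat X_{\alpha,\rho_2}$ and $\hat X_{\alpha,\rho_2}/\hat X_{\alpha,\rho_1}$ have the same Mellin transform on an interval of the real axis inside their common strip of convergence, hence the same law.

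For the first identity I would set $R:=X_{\alpha,\rho_1}/X_{\alpha,\rho_2}$ and split according to the signs of numerator and denominator: for real $s$ with $0<|s|<\min(1,\alpha)$, independence yields
\[
\e\!\left[|R|^{s}{\bf 1}_{\{R>0\}}\right]=M_{\rho_1}(s)M_{\rho_2}(-s)+N_{\rho_1}(s)N_{\rho_2}(-s),\qquad \e\!\left[|R|^{s}{\bf 1}_{\{R<0\}}\right]=M_{\rho_1}(s)N_{\rho_2}(-s)+N_{\rho_1}(s)M_{\rho_2}(-s).
\]
Inserting the formulas for $M_\rho$ and $N_\rho$, each right-hand side becomes $C_\alpha(s)C_\alpha(-s)$ times a sum of products $\sin(\pi a s)\sin(\pi b s)$, and is therefore both symmetric under $\rho_1\leftrightarrow\rho_2$ and even in $s$. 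Since $R^{-1}=X_{\alpha,\rho_2}/X_{\alpha,\rho_1}$, the first of these properties shows that $R$ and $R^{-1}$ have the same Mellin transforms on $(0,\infty)$ and on $(-\infty,0)$, and since $R$ has no atom at $0$ this forces $R\stackrel{d}{=}R^{-1}$, which is \eqref{identity_ratio}.

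These computations are routine; the points requiring genuine attention are two. First, one has to observe that the strip on which all the Mellin transforms involved converge at once is nonempty — for real $s$ it is precisely $|s|<\min(1,\alpha)$. Second — and this is the main thing to get right — one must invoke the uniqueness of the Mellin transform (a finite measure on $(0,\infty)$ is determined by its Mellin transform on any real interval inside its strip of convergence), applied separately to the restrictions of the law of $R$ to $(0,\infty)$ and to $(-\infty,0)$; equivalently, one uses that the distribution of a real random variable with no atom at the origin is determined by the pair of functions $s\mapsto\e[|W|^s{\bf 1}_{\{W>0\}}]$, $s\mapsto\e[|W|^s{\bf 1}_{\{W<0\}}]$, and that passing to $W^{-1}$ replaces each by its value at $-s$.
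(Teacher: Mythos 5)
Your proof is correct, but it takes a genuinely different route from the paper. The paper does not recompute anything: it transfers the known classical identity (Zolotarev's (3.2.3) and (3.2.5) for $Y_{\alpha,\rho}$ and its cutoff) to the free setting by writing $Y_{\alpha,\rho_i}\stackrel{d}{=}X_{\alpha,\rho_i}\times Z_i^{1-\frac{1}{\alpha}}$ (Corollary \ref{corollary1}), so that $Y_{\alpha,\rho_1}/Y_{\alpha,\rho_2}\stackrel{d}{=}\bigl(X_{\alpha,\rho_1}/X_{\alpha,\rho_2}\bigr)\times\bigl(Z_1/Z_2\bigr)^{1-\frac{1}{\alpha}}$ and likewise with the indices swapped, and then cancelling the common independent positive factor by the division Lemma \ref{lemma_division} (whose hypotheses hold because Theorem \ref{thm_Mellin} gives finite moments of small order). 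You instead argue directly from Theorem \ref{thm_Mellin}: using $-X_{\alpha,\rho}\stackrel{d}{=}X_{\alpha,1-\rho}$ you write the sign-split Mellin transforms of the ratio, observe that they are symmetric in $\rho_1\leftrightarrow\rho_2$ and even in $s$, and conclude by Mellin uniqueness applied separately on each half-line; your computations check out, the common strip $|s|<\min(1,\alpha)$ is correctly identified, and the removability of the singularity at $s=0$ and the absence of an atom at $0$ are handled properly. What your route buys is self-containedness: it needs neither the classical ratio symmetry from Zolotarev nor Corollary \ref{corollary1} and Lemma \ref{lemma_division}. What the paper's route buys is brevity and thematic emphasis (the classical--free transfer is the point of the paper), and it localizes the uniqueness issues for two-sided variables in Lemma \ref{lemma_division}, whereas you must justify the sign-decomposed Mellin uniqueness yourself -- your appeal to analyticity in a strip and determination of the law from the transform on a real interval is exactly the analytic-continuation argument in the proof of Lemma \ref{lemma_division}, so you are implicitly reproving it. Your remark that the cutoff identity tacitly requires $\rho_1,\rho_2>0$ is a fair observation that the paper leaves unstated.
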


The next theorem is our second main result, where we establish a convergent series representation for $\psi_{\alpha,\rho}(x)$ (we remind
the reader that  $\psi_{\alpha,\rho}(x)$ denotes the density of a free stable distribution $\nu_{\alpha,\rho}$). 
\begin{theorem}\label{theorem_series_expansions}
Assume that $\alpha \in (0,1)$ and $\rho \in [0,1]$ and denote $x^*:=\alpha(1-\alpha)^{{1}/{\alpha}-1}$.  Then
\begin{align}\label{series_x_greater_than_A}
\psi_{\alpha,\rho}(x)&=\frac{1}{\pi} \sum\limits_{n\ge 1} (-1)^{n-1}\frac{\Gamma(1+\alpha n)}{n!\Gamma(2+(\alpha-1)n)} 
\sin(n\alpha \rho \pi) x^{-\alpha n-1}, \;\;\; x \ge x^*,\\
\label{series_x_smaller_than_A}
\psi_{\alpha,\rho}(x)&=\frac{1}{\pi} \sum\limits_{n\ge 1} (-1)^{n-1}\frac{\Gamma\left(1+{n}/{\alpha}\right)}
{n!\Gamma\left(2+\left({1}/{\alpha}-1\right)n\right)} 
\sin(n\rho \pi) x^{n-1}, \;\;\; 0\le x \le x^*.
\end{align}
The corresponding series expansions for $\psi_{\alpha,\rho}(x)$ when $\alpha \in (1,2]$ and $\rho \in [1-1/\alpha,1/\alpha]$
can be obtained from \eqref{duality_identity1}, \eqref{series_x_greater_than_A} and \eqref{series_x_smaller_than_A}.
The expressions for $x<0$ follow from  $\psi_{\alpha,\rho}(x)=\psi_{\alpha,1-\rho}(-x)$. 
\end{theorem}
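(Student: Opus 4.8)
The plan is to extract the density from the Mellin transform in Theorem \ref{thm_Mellin} by Mellin inversion, and then to obtain the two series by collecting residues on either side of the inversion contour. By Theorem \ref{thm_Mellin}, for $\alpha\in(0,1)$ and $\rho\in[0,1]$ the function
\begin{equation*}
\mathcal{M}(s):=\e\big[(X_{\alpha,\rho})^s{\bf 1}_{\{X_{\alpha,\rho}>0\}}\big]
=\frac{1}{\pi}\sin(\pi\rho s)\frac{\Gamma(s)\Gamma\!\left(1-\frac{s}{\alpha}\right)}{\Gamma\!\left(2+s-\frac{s}{\alpha}\right)}
\end{equation*}
is analytic in the strip $-1<\re(s)<\alpha$. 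First I would argue that $\psi_{\alpha,\rho}(x)$ for $x>0$ is recovered by the inverse Mellin transform
$\psi_{\alpha,\rho}(x)=\frac{1}{2\pi\i}\int_{c-\i\infty}^{c+\i\infty}\mathcal{M}(s)x^{-s-1}\,\d s$
for any $c\in(-1,\alpha)$; this requires controlling the decay of $\mathcal{M}(c+\i t)$ as $|t|\to\infty$, which follows from Stirling's formula since the three Gamma factors combine to give exponential decay of order $\exp(-\tfrac{\pi}{2}(2-\tfrac{1}{\alpha})|t|)$ (note $2-\tfrac1\alpha<0$ is impossible here because $\alpha<1$ gives $\tfrac1\alpha>1$, so in fact $2-\tfrac1\alpha$ may be negative — one must check the sign and, if needed, combine with the $\sin(\pi\rho s)$ factor or shift the argument; this is the first place care is needed). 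Once the inversion formula is justified, absolute continuity and smoothness of $\psi_{\alpha,\rho}$ also follow.

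Next I would shift the contour. Moving $c$ to $+\infty$ through the poles of the integrand that lie in $\re(s)>\alpha$, and picking up residues, should produce \eqref{series_x_greater_than_A}; moving $c$ to $-\infty$ through the poles in $\re(s)<-1$ gives \eqref{series_x_smaller_than_A}. The poles to the right come from $\Gamma\!\left(1-\frac{s}{\alpha}\right)$ at $s=\alpha n$, $n\ge1$ (the $\sin$ factor kills the pole that the classical case would have, and $\Gamma(s)$ has no poles there), with residue computation giving $\frac{\alpha}{\pi}(-1)^{n-1}\frac{\Gamma(\alpha n)\sin(\pi\alpha\rho n)}{n!\,\Gamma(2+(\alpha-1)n)}$, and after the reflection $\Gamma(1+\alpha n)=\alpha n\,\Gamma(\alpha n)$ this matches the stated coefficient. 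The poles to the left come from $\Gamma(s)$ at $s=-n$, $n\ge0$; the $n=0$ term vanishes because $\sin(\pi\rho s)|_{s=0}=0$ (consistent with $\psi$ being a genuine density rather than having an atom-like constant term), and for $n\ge1$ one rewrites $\Gamma(-n+\cdot)$ and $\Gamma(2+\cdot)$ via reflection to land on \eqref{series_x_smaller_than_A}. I would compute one residue carefully and state the rest follow identically.

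The main obstacle is the contour-shifting estimate: one must show that the integral over the connecting horizontal segments (and the tail at $\re(s)=\pm\infty$) vanishes, i.e. that $\mathcal{M}(s)x^{-s-1}\to0$ as $\re(s)\to+\infty$ uniformly for bounded $|\im(s)|$, and likewise as $\re(s)\to-\infty$. Using Stirling one finds that along $\re(s)=\sigma$ the modulus of the Gamma-quotient behaves roughly like $|\Gamma(\sigma)\Gamma(1-\sigma/\alpha)/\Gamma(2+\sigma(1-1/\alpha))|$, which by the reflection formula and Stirling grows/decays like a constant times $\big(\tfrac{\alpha^{1/\alpha-1}}{(1-\alpha)^{1/\alpha-1}}\big)^{\sigma}\cdot\sigma^{c}$ type factors; balancing this against $x^{-\sigma}$ produces precisely the threshold $x^*=\alpha(1-\alpha)^{1/\alpha-1}$ — the series \eqref{series_x_greater_than_A} converges and the rightward shift is legitimate exactly when $x\ge x^*$, and symmetrically for \eqref{series_x_smaller_than_A} when $x\le x^*$. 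Establishing this asymptotic cleanly, and checking that the two series agree at $x=x^*$ (which they must, since both represent $\psi_{\alpha,\rho}(x^*)$), is the technical heart of the argument. Finally, the extension to $\alpha\in(1,2]$ via the duality \eqref{duality_identity1} and the reflection $\psi_{\alpha,\rho}(x)=\psi_{\alpha,1-\rho}(-x)$ for $x<0$ is a direct substitution, which I would spell out in one or two lines.
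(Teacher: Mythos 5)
Your strategy is exactly the paper's: invert the Mellin transform of Theorem \ref{thm_Mellin} and collect residues at $s=\alpha n$ (for $x\ge x^*$) and at $s=-m$ (for $0\le x\le x^*$), with Stirling-type estimates producing the threshold $x^*$. As an outline this is sound, but as written it has concrete defects. The residue contribution at $s=\alpha n$ should carry $(n-1)!$, not $n!$: the residue of $\Gamma(1-s/\alpha)$ at $s=\alpha n$ equals $(-1)^{n}\alpha/(n-1)!$, and only with this value does $\alpha\,\Gamma(\alpha n)=\Gamma(1+\alpha n)/n$ reproduce the coefficient in \eqref{series_x_greater_than_A}; your displayed expression is off by a factor of $n$. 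Your vertical decay rate is also wrong: along $\re(s)=c$ the three Gamma factors together decay like $e^{-\pi|\im(s)|}$, which combined with $|\sin(\pi\rho s)|\asymp e^{\pi\rho|\im(s)|}$ gives $e^{-\pi(1-\rho)|\im(s)|}$, not $e^{-\frac{\pi}{2}(2-\frac{1}{\alpha})|\im(s)|}$; and in the horizontal direction the Gamma quotient grows like $|s|^{-3/2}(x^*)^{\re(s)}$ with $x^*=\alpha(1-\alpha)^{\frac{1}{\alpha}-1}$, not like $\bigl(\alpha/(1-\alpha)\bigr)^{(\frac{1}{\alpha}-1)\re(s)}$.

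More importantly, the step you defer as the technical heart is precisely what must be supplied, and the paper's device is the one to use: by the reflection formula write $M_{\alpha,\rho}=f_1 f_2$, where $f_1$ is purely trigonometric and $f_2$ is a ratio of Gamma functions; then $f_1$ is uniformly bounded for $\re(s)\ge 0$ outside small disks around the poles $s\in\alpha\zz$, while $|f_2(s)|\le C|s|^{-3/2}(x^*)^{\re(s)}$. Shifting the contour to the vertical lines $\re(s)=b_k=\alpha(2k+1)/2$, which pass midway between consecutive poles (so the uniform bound on $f_1$ applies there), the remainder integral is at most $C\,(x^*/x)^{b_k}\int_{\r}(b_k^2+u^2)^{-3/4}\,\d u\to 0$ for every $x\ge x^*$; the extra factor $b_k^{-1/2}$ is what rescues the borderline case $x=x^*$. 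There is no delicate horizontal-segment issue: the exponential (or at worst $|s|^{-3/2}$) decay in $|\im(s)|$ disposes of those. Finally, convergence of the two series at $x=x^*$ should be verified directly (reflection plus Stirling gives coefficients of size $O(n^{-3/2}(x^*)^{\alpha n})$ and $O(n^{-3/2}(x^*)^{-n})$), rather than inferred from the fact that both ought to represent $\psi_{\alpha,\rho}(x^*)$. With these repairs your argument coincides with the paper's proof; the extensions to $\alpha\in(1,2]$ via \eqref{duality_identity1} and to $x<0$ via $\psi_{\alpha,\rho}(x)=\psi_{\alpha,1-\rho}(-x)$ are indeed immediate.
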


The series expansions \eqref{series_x_greater_than_A} and \eqref{series_x_smaller_than_A} have direct analogues in the case of classical stable distributions. The following result can be found in \cite[Theorems 2.4.2 and 2.5.1]{Zolotarev1986}. Let $p_{\alpha,\rho}(x)$ denote the density of a random variable $Y_{\alpha,\rho}$. Then 
for $\alpha\in(0,1)$ and $\rho\in[0,1]$ we have a convergent series representation
\begin{equation}\label{series_x_classical 1} 
p_{\alpha,\rho}(x)=\frac{1}{\pi} \sum\limits_{n\ge 1} (-1)^{n-1}\frac{\Gamma(1+\alpha n)}{n!} 
\sin(n\alpha \rho \pi) x^{-\alpha n-1}, \;\;\; x >0, 
\end{equation}
and an asymptotic expansion 
\begin{equation}\label{series_x_classical 2} 
p_{\alpha,\rho}(x)\sim\frac{1}{\pi} \sum\limits_{n\ge 1} (-1)^{n-1}\frac{\Gamma\left(1+{n}/{\alpha}\right)}
{n!}\sin(n\rho \pi) x^{n-1}, \;\;\; x \to 0^+.
\end{equation}
When $\alpha \in (1,2)$ the roles of the asymptotic and convergent series in \eqref{series_x_classical 1}  and 
\eqref{series_x_classical 2} are interchanged. 
We also note that the densities $\psi_{\alpha,\rho}$ and $p_{\alpha,\rho}$ can be represented as Fox $H$-functions \cite{Fox_H}, since their  Mellin transforms are ratios of products of Gamma functions.

Our third main result is the series expansion for the characteristic function of the free stable distribution, which we denote by
\begin{equation}\label{def_Fourier_free_stable}
{\mathfrak{f}}_{\alpha,\rho}(z):=\int_{\r} e^{\i zx } \nu_{\alpha,\rho}(\d x).
\end{equation}

\begin{theorem}\label{thm_f_alpha_rho_series}
Assume that $(\alpha,\rho) \in {\mathcal A}$.  Then for $z>0$
\begin{equation}\label{f_alpha_rho_series}
{\mathfrak{f}}_{\alpha,\rho}(z)=\sum\limits_{n\ge 0} (-1)^n \frac{e^{\pi \i \alpha ( {1}/{2} - \rho )n}}{n! \Gamma(2+(\alpha-1)n)} z^{\alpha n}.
\end{equation}
\end{theorem}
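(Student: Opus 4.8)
The plan is to derive the series \eqref{f_alpha_rho_series} from the Mellin transform formula \eqref{eq_Mellin_1} via the Mellin--Barnes representation of the characteristic function, mirroring the classical approach to obtaining series expansions of stable densities from their Mellin transforms. First I would express the characteristic function as a Mellin--Barnes integral: since $\mathfrak{f}_{\alpha,\rho}(z) = \e[e^{\i z X_{\alpha,\rho}}]$, splitting into the positive and negative parts of $X_{\alpha,\rho}$ and using the Mellin transforms of both (the negative-part Mellin transform follows from \eqref{eq_Mellin_1} by the reflection $\psi_{\alpha,\rho}(x)=\psi_{\alpha,1-\rho}(-x)$, equivalently by replacing $\rho$ with $1-\rho$), one obtains for $z>0$ a contour integral of the form
\begin{equation*}
\mathfrak{f}_{\alpha,\rho}(z)=\frac{1}{2\pi \i}\int_{c-\i\infty}^{c+\i\infty} \Gamma(1-w)\, z^{-w} \Big(\text{combination of }\tfrac{1}{\pi}\sin(\pi\rho w)\text{ and }\tfrac1\pi\sin(\pi(1-\rho)w)\Big)\frac{\Gamma(w)\Gamma(1-\tfrac{w}{\alpha})}{\Gamma(2+w-\tfrac{w}{\alpha})}\, \d w,
\end{equation*}
for a suitable vertical line $\re(w)=c$ in the fundamental strip, where the extra factor $\Gamma(1-w)$ (or the appropriate Euler-type factor) comes from integrating $e^{\i z x}$ against a power of $x$. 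The $\sin$--$\Gamma$ combination should collapse, using $\Gamma(w)\Gamma(1-w)=\pi/\sin(\pi w)$ and elementary trigonometric identities, to something proportional to $e^{\pi\i\alpha(\frac12-\rho)w}/\Gamma(2+(\alpha-1)w)$ times $z^{-w}$ after the variable change $w \mapsto -w$ or similar — this is the heart of why the clean form \eqref{f_alpha_rho_series} emerges.

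Next I would shift the contour to the right, past the poles of the integrand. After the simplification the integrand has poles precisely at $w=-n$, $n\ge 0$ (nonnegative integers), coming from $\Gamma$-factors, with residues giving exactly the terms $(-1)^n e^{\pi\i\alpha(\frac12-\rho)n}z^{\alpha n}/(n!\,\Gamma(2+(\alpha-1)n))$; summing these residues yields \eqref{f_alpha_rho_series}. To justify the contour shift and termwise summation I would need decay estimates on the integrand along vertical lines as $|\im(w)| \to \infty$, using Stirling's formula to control the ratio of Gamma functions; the combination $\Gamma(w)\Gamma(1-\tfrac{w}{\alpha})/\Gamma(2+w-\tfrac{w}{\alpha})$ decays like a negative power of $|w|$ times an exponential in $\arg$, and together with $z^{-w}$ one gets integrability and vanishing of the horizontal connecting segments as they are pushed to infinity. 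An alternative, perhaps cleaner, route is to observe that the right-hand side of \eqref{f_alpha_rho_series} is an entire function of $z$ (the coefficients decay superexponentially because $1/\Gamma(2+(\alpha-1)n)$ beats $1/n!$ growth of the numerator in the relevant regime — one checks $\sum |z|^{\alpha n}/(n!\,|\Gamma(2+(\alpha-1)n)|) < \infty$ via Stirling), compute its Mellin transform directly by integrating the series against $x^{s-1}$ term by term (or rather, recognize it as a Fox/Wright-type function whose Mellin transform is known), and match with \eqref{eq_Mellin_1}; uniqueness of the Mellin transform on the strip $s\in(-1,\alpha)$ then forces equality.

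The main obstacle I anticipate is twofold. First, getting the Mellin--Barnes representation of $\mathfrak{f}_{\alpha,\rho}(z)$ correct requires care: the random variable $X_{\alpha,\rho}$ is two-sided, so one must correctly assemble contributions from $\{X>0\}$ and $\{X<0\}$, keep track of the phases $e^{\pm\i\pi w/2}$ arising from $\int_0^\infty x^{-w} e^{\pm\i z x}\d x = \Gamma(1-w) z^{w-1} e^{\mp \i\pi(1-w)/2}$ (valid in a suitable strip), and verify that the resulting combination of $\sin$ factors and phases telescopes into the single exponential $e^{\pi\i\alpha(\frac12-\rho)n}$ — this trigonometric bookkeeping is where sign errors are most likely. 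Second, one must confirm the fundamental strip for the Mellin transform of $\mathfrak f_{\alpha,\rho}$ as a function overlaps appropriately and that no spurious poles from the $\Gamma(1-w)$ factor survive after the $\sin$--$\Gamma$ cancellation; the trigonometric collapse should remove the poles at positive integers, leaving only $w=-n$, but this needs to be checked explicitly. Once these two points are settled, the residue computation and convergence estimates are routine applications of Stirling's formula.
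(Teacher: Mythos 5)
Your main route is essentially the paper's own proof: the paper's Lemma \ref{lemma_eqn_fourier_mellin} carries out precisely the positive/negative splitting and trigonometric collapse you describe (via the Profeta--Simon formulas for $\int_0^\infty z^{s-1}\e[\cos(zX)]\,\d z$ and its sine analogue), yielding $\int_0^\infty {\mathfrak{f}}_{\alpha,\rho}(z)z^{s-1}\,\d z=\tfrac{1}{\alpha}e^{\pi\i s(\rho-\frac12)}\Gamma(\tfrac{s}{\alpha})/\Gamma(2-s+\tfrac{s}{\alpha})$, after which the Mellin transform is inverted and the contour shifted, collecting residues and controlling the remainder by Stirling's formula, exactly as you outline. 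Two small corrections to your sketch: the shift must go toward the poles of $\Gamma(s/\alpha)$ at $s=-\alpha n$ (to the left in the paper's normalization, which is what produces the $z^{\alpha n}$ terms), and your proposed ``cleaner'' alternative of computing the Mellin transform of the series term by term cannot work literally, since each term $z^{\alpha n}$ has a divergent Mellin integral over $(0,\infty)$.
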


\vspace{0.25cm}
Note that for $z \in \r$ the value of ${\mathfrak{f}}_{\alpha,\rho}(z)$ is just the conjugate of ${\mathfrak{f}}_{\alpha,\rho}(-z)$, thus the
 series representation for ${\mathfrak{f}}_{\alpha,\rho}(z)$ for negative $z$ follows at once from \eqref{f_alpha_rho_series}. In light of our previous results, it should not be surprising that the infinite series in \eqref{f_alpha_rho_series} has its counterpart in the case of the classical stable distributions. Indeed, the characteristic function of the classical stable distribution $\mu_{\alpha,\rho}$ is given by
$$
{\mathfrak{g}}_{\alpha,\rho}(z)=\sum\limits_{n\ge 0} (-1)^n \frac{e^{\pi \i \alpha ( {1}/{2} - \rho )n}}{n!} z^{\alpha n}, \;\;\;\;\; z>0.
$$
The above result follows at once from \eqref{def_g_alpha_rho}.

\section{Proofs}

We use notation $\C^+:=\{z \in \C \; : \; \im(z)>0\}$ for the upper half-plane, and similarly
$\C^-:=\{z \in \C \; : \; \im(z)<0\}$ for the lower half-plane. 
For $s\in \C$ lying on the vertical line $\re(s)=0$ we define
\begin{equation}\label{def_Mellin_transform}
M_{\alpha,\rho}(s):=\e\left[(X_{\alpha,\rho})^s {\bf 1}_{\{X_{\alpha,\rho}>0\}}\right]=\int\limits_{0}^{\infty} x^{s} \psi_{\alpha,\rho}(x) \d x. 
\end{equation}

\vspace{0.25cm}
\noindent
{\bf Proof of Theorem \ref{thm_Mellin}:} 
Our first goal is to establish the identity \eqref{eq_Mellin_1} for $s\in (-1,0)$.  Assume that 
$(\alpha, \rho) \in {\mathcal A}$. 
Let $\gamma_{\alpha,\rho}\subset \bar \C^-$ be the the curve given in polar coordinates 
$$
\gamma_{\alpha,\rho}:=\{r(\theta)e^{-\i \theta} \; : \; 0<\theta < \pi\} \cup \{0\},
$$
 where
$$
r(\theta):=\Big(\frac{\sin(\theta)}{\sin[(1-\alpha\rho)\pi+(\alpha-1)\theta)]}\Big)^{1/\alpha}.
$$
As explained in Lemma A1.1 in \cite{BPB1999}, $\gamma_{\alpha,\rho}$ is a closed simple curve lying in the lower half-plane. We 
consider this curve as the boundary of a Jordan domain, denoted by $\Omega$. 
As was shown in Lemma A1.1 in \cite{BPB1999}, the function $G_{\alpha,\rho}(z)$ is a one-to-one conformal transformation from $\C^+$ onto $\Omega$, which extends continuously to the boundary. Thus $G_{\alpha,\rho}$ gives a homeomorphism of 
$\C^+ \cup \r \cup \{\infty\}$ with $\overline{\Omega}$. 
Let us define 
$$
z^*:=\exp(-\pi \i \rho).
$$
 One can check that $1/z^*+\phi_{\alpha,\rho}(1/z^*)=0$. This shows that 
$G_{\alpha,\rho}(z)$ (being the inverse of $1/z+\phi_{\alpha,\rho}(1/z)$) maps the positive real line $(0,\infty)$ onto the curve
$$
\gamma^+:=\{r(\theta)e^{-\i \theta} \; : \; 0\le \theta \le \pi\rho\}.
$$
Since $G_{\alpha,\rho}(0)=z^*$ and $G_{\alpha,\rho}(+\infty)=0$, the curve $\gamma^+$ is traversed in the direction from $z^*$ to $0$. 

The fact that $G_{\alpha,\rho}(z)$ is the inverse of $1/z+\phi_{\alpha,\rho}(1/z)$ can be expressed in the form
\begin{equation}\label{eqn_w_x}
w=G_{\alpha,\rho}(x) \Longleftrightarrow  x=w^{-1}-e^{\pi \i \alpha\rho} w^{\alpha-1}. 
\end{equation}
We know that $G_{\alpha,\rho}(x) \to 0$ as $x \to \infty$, and formula \eqref{eqn_w_x} shows that
\begin{equation}\label{eq_asymptotics_G}
G_{\alpha,\rho}(x)=w=\left( 1 - e^{\pi \i \alpha\rho} w^{\alpha} \right)/x=O(1/x), \;\;\; x\to \infty. 
\end{equation}
At the same time, since the function $1/z+\phi_{\alpha,\rho}(1/z)$ is analytic and has a non-vanishing derivative at $z=z^*$, 
its inverse function
$G_{\alpha,\rho}(z)$ is analytic in the neighborhood of zero. 
By the inversion formula for the Cauchy transform we have 
$$
\psi_{\alpha,\rho}(x)=-\frac{1}{\pi} \im( G_{\alpha,\rho}(x) ), \;\;\; x>0,
$$
therefore for $s\in (-1,0)$ 
\begin{equation}\label{formula_Ms_integral}
M_{\alpha,\rho}(s)=-\frac{1}{\pi} \im \left[ \int_{(0,\infty)} x^{s} G_{\alpha,\rho}(x) \d x   \right].
\end{equation}
The above integral exists for all $s\in (-1,0)$ due to \eqref{eq_asymptotics_G} and the fact that $G_{\alpha,\rho}(x)$ is 
analytic in the neighborhood of $x=0$. 

The main step of the proof is to perform a change of variables  $w=G_{\alpha,\rho}(x)$ in the above integral. 
From \eqref{eqn_w_x} we find
$$
\frac{\d x}{\d w}=-w^{-2}-(\alpha-1) e^{\pi \i \alpha\rho} w^{\alpha-2}.
$$  
Using the above result and the fact that $G_{\alpha,\rho}$ maps $(0,\infty)$ onto $\gamma^+$, we rewrite the integral in \eqref{formula_Ms_integral} as follows: 
\begin{align}\label{computing_M_s_eq1}
&\int_{(0,\infty)} x^{s} G_{\alpha,\rho}(x) \d x =\int_{\gamma^+} 
\left(w^{-1}-e^{\pi \i \alpha\rho} w^{\alpha-1}\right)^s w \left(-w^{-2}-(\alpha-1) e^{\pi \i \alpha\rho} w^{\alpha-2}\right) \d w\\ \nonumber
&=-\int_{\gamma^+} \left(1-\left(\frac{w}{z^*}\right)^{\alpha}\right)^s w^{-s-1} \d w
-(\alpha-1)e^{\pi \i \alpha\rho} \int_{\gamma^+} \left(1-\left(\frac{w}{z^*}\right)^{\alpha}\right)^s w^{-s+\alpha-1} \d w.
\end{align}
Note that both integrals in the right-hand side of \eqref{computing_M_s_eq1} are finite for $s\in (-1, 0)$. 
Let us compute the first integral in the right-hand side of  \eqref{computing_M_s_eq1}: we make a substitution
$w=uz^*$ and obtain
\begin{equation*}
I_1:=\int_{\gamma^+} \left(1-\left(\frac{w}{z^*}\right)^{\alpha}\right)^s w^{-s-1} \d w=
e^{\pi \i \rho s} \int_{L} (1-u^{\alpha})^s u^{-s-1} \d u,
\end{equation*}
where $L:= \gamma^+/z^*$, or, in other words, $L$ is the curve $\gamma^+$ rotated counterclockwise by the angle  
$\pi \rho$. It is clear that the curve $L$ connects points $0$ and $1$, it is traversed in the direction $1 \mapsto 0$, and 
$L\subset \C^+$ (except for the two endpoints). The 
function $w \mapsto f(w)=(1-w^{\alpha})^s w^{-s-1}$ extends to $\C^+$ analytically, and therefore we can deform the contour of integration 
$L$ 
into the interval $[0,1]$, and we finally obtain
\begin{align}\label{computing_I1}
\nonumber
I_1=e^{\pi \i \rho s} \int_{1}^0 (1-u^{\alpha})^s u^{-s-1} \d u&=
- \frac{1}{\alpha} e^{\pi \i \rho s}
\int_{0}^1 (1-x)^s x^{-{s/\alpha}-1} \d x\\
&=- \frac{1}{\alpha} e^{\pi \i \rho s}
\frac{\Gamma(s+1)\Gamma(-{s/\alpha})}{\Gamma(1+s-{s/\alpha})}.
\end{align}
When computing the above integral, in the second step we made a substitution $u=x^{\frac{1}{\alpha}}$ and in the third step we have used the well-known integral representation for the beta function. 
We deal with the second integral in the right-hand side of 
\eqref{computing_M_s_eq1} in exactly the same way, and we find that for $s\in (-1,0)$ 
\begin{equation}\label{computing_I2}
I_2:=\int_{\gamma^+} \left(1-\left(\frac{w}{z^*}\right)^{\alpha}\right)^s w^{-s+\alpha-1} \d w
=- \frac{1}{\alpha}  e^{\pi \i \rho (s-\alpha)}
\frac{\Gamma(s+1)\Gamma(1-{s/\alpha})}{\Gamma(2+s-{s/\alpha})}.
\end{equation}
Combining formulas \eqref{formula_Ms_integral}, \eqref{computing_M_s_eq1}, \eqref{computing_I1} and 
\eqref{computing_I2} we obtain
\begin{align*}
M_{\alpha,\rho}(s)&=-\frac{1}{\pi} \im \bigg[ \frac{1}{\alpha} e^{\pi \i \rho s}
\frac{\Gamma(s+1)\Gamma(-{s/\alpha})}{\Gamma(1+s-{s/\alpha})}
\\
 & \qquad\qquad - (\alpha-1)e^{-\pi \i (1-\alpha \rho)} \frac{1}{\alpha}  e^{\pi \i \rho(s-\alpha)}
\frac{\Gamma(s+1)\Gamma(1-{s/\alpha})}{\Gamma(2+s-{s/\alpha})}
 \bigg]\\
 &=-\frac{1}{\pi\alpha} \sin\left(\pi\rho s \right) \bigg[
\frac{\Gamma(s+1)\Gamma(-{s/\alpha})}{\Gamma(1+s-{s/\alpha})}
+ (\alpha-1)
\frac{\Gamma(s+1)\Gamma(1-{s/\alpha})}{\Gamma(2+s-{s/\alpha})}
 \bigg]\\
 &=-\frac{1}{\pi\alpha} \sin\left(\pi\rho s \right)
 \frac{\Gamma(s)\Gamma(1-{s/\alpha})}{\Gamma(2+s-{s/\alpha})}
 \left[ -\alpha(1+s-{s/\alpha})+(\alpha-1)s \right]\\
 &=\frac{1}{\pi}\sin\left(\pi\rho s \right)
 \frac{\Gamma(s)\Gamma(1-{s/\alpha})}{\Gamma(2+s-{s/\alpha})}.
\end{align*}
This ends the proof of \eqref{eq_Mellin_1} for $s \in (-1,0)$. The extension of the result for $s\in (-1,\alpha)$ follows by analytic continuation. 
\qed

The above proof of Theorem \ref{thm_Mellin} is similar in spirit to the proof of Lemma 10 in \cite{HM2013}. 
One can also give an alternative proof of Theorem \ref{thm_Mellin}, based on the following four steps.
\begin{itemize}
\item[(i)] The Mellin transform $M_{\alpha,1}(z)$ for $\alpha\in(0,1)$ is known due to Theorem 3 in \cite{HM2013}. 
\item[(ii)] Assume that $\alpha \in (0,1)$ and $\rho \in [0,1]$. One can prove that 
\begin{equation}\label{factorization stable Cauchy}
X_{\alpha,\rho}\stackrel{d}{=} X_{\alpha,1} \times K_{\rho}, 
\end{equation}
where the random variable $K_\rho$ is independent of $X_{\alpha,\rho}$ and follows the Cauchy distribution 
\begin{equation}\label{cauchy density}
\p(K_{\rho} \in \d x)=\frac{1}{\pi} \times \frac{\sin(\pi \rho)}{(x+\cos(\pi\rho))^2+\sin^2(\pi\rho)} \d x. 
\end{equation}
The factorization (\ref{factorization stable Cauchy}) can be established as follows. Let $G_{X}(z)$ denote the Cauchy transform $\e[1/(z-X)]$ of a random variable $X$. First note the fact $G_{K_\rho}(z)=1/(z+e^{\pi \i \rho})$ which can be proved by the residue theorem (this formula and the Cauchy transform inversion formula give the density (\ref{cauchy density})).
 Then for $z \in \C^+$ we have 
\begin{equation}\label{Cauchy2}
\begin{split}
G_{X_{\alpha,1} \times K_\rho}(z)
&=\e\!\left[\frac{1}{z-X_{\alpha,1} \times K_\rho}\right]  
= \e\!\left[\frac{1}{X_{\alpha,1}}\times \frac{1}{z/X_{\alpha,1} - K_\rho}\right]  \\
&=\e\!\left[\frac{1}{X_{\alpha,1}}\times \frac{1}{z/X_{\alpha,1} +e^{\pi \i \rho}}\right] 
= \e\!\left[\frac{ -e^{-\pi \i \rho}}{-z e^{-\pi\i \rho}- X_{\alpha,1}}\right] \\
&=  -e^{-\pi \i \rho}G_{X_{\alpha,1}}( -e^{-\pi \i \rho} z). 
\end{split}
\end{equation}
Recall the fact that for any random variable $X$ and any $a>0$, there exists $b>0$ such that $G_X$ has the right compositional inverse $G_X^{-1}$ defined in $$\Delta_{a,b}=\{z\in \C^- \mid \im z \in(-b,0), a |\re z| \leq -\im z \},$$ such that 
\begin{equation}\label{Voi}
G_X^{-1}(z)= 1/z +\phi_X(1/z),\qquad z\in\Delta_{a,b}; 
\end{equation}
 see \cite{BV1993}.  From (\ref{Cauchy2})  and (\ref{Voi}) one has
 $$
\phi_{X_{\alpha,1} \times K_\rho}(z)= -e^{\i \pi \rho}\phi_{X_{\alpha,1}}(-e^{-\i \pi \rho}z) = \phi_{X_{\alpha,\rho}}(z) 
$$
in a common domain. This shows the factorization (\ref{factorization stable Cauchy}). 

\item[(iii)] The Mellin transform of the Cauchy distribution is given by  
$$
\e[(K_\rho)^s {\bf 1}_{\{K_\rho>0\}}] = \frac{\sin \pi \rho s}{\sin \pi s},\qquad -1<s<1.
$$

\item[(iv)] The explicit expression for $M_{\alpha,\rho}(z)$ for $\alpha\in(0,1)$ and $\rho\in[0,1]$ follows from 
the factorization  $X_{\alpha,\rho}\stackrel{d}{=} X_{\alpha,1} \times K_{\rho}$. We then use the duality identity \eqref{duality_identity1} to obtain $M_{\alpha,\rho}(z)$ for $\alpha\in(1,2]$ and $\rho\in[1-{1/\alpha},{1/\alpha}]$. 
\end{itemize}

\vspace{0.25cm}
\noindent
{\bf Proof of Corollary \ref{corollary_duality}:} 
Formula \eqref{eq_Mellin_1} implies that for all $(\alpha,\rho)\in {\mathcal A}$ and $-1<s<\alpha^{-1}$ we have the identity
$M_{\alpha,\rho}(-\alpha s)=(1/\alpha) \times M_{1/\alpha,\alpha\rho}(s)$, which is equivalent to
\eqref{duality_identity1} and \eqref{duality_identity2}. 
\qed

\vspace{0.25cm}
\noindent
{\bf Proof of Corollary \ref{corollary1}:} 
Assume that $s\in \C$ and $\re(s)=0$. It is easy to see that
$$
\e\left[Z^{(1-{1/\alpha})s}\right]=\Gamma\left(2+\left(1-{1/\alpha}\right)s\right).
$$
Using the above identity and formulas \eqref{eq_Mellin_1} and \eqref{eq_Mellin_1_classical}  we obtain
$$
\e\left[\left(Y_{\alpha,\rho}\right)^s {\bf 1}_{\{Y_{\alpha,\rho}>0\}}\right] =
\e\left[Z^{(1-{1/\alpha})s} \right] \times \e\left[\left(X_{\alpha,\rho}\right)^s {\bf 1}_{\{X_{\alpha,\rho}>0\}}\right].
$$
The above result and the uniqueness of the Mellin transform imply that 
\begin{equation}\label{identity_distribution}
\p(Y_{\alpha,\rho} \in A)=\p (X_{\alpha,\rho} \times 
Z^{1-{1/\alpha}} \in A),
\end{equation}
 for all Borel sets $A \subset (0,\infty)$. 
The fact that \eqref{identity_distribution} also holds for all Borel sets $A \subset (-\infty,0)$ follows by using the symmetry condition
$-X_{\alpha,\rho} \stackrel{d}{=} X_{\alpha,1-\rho}$ and $-Y_{\alpha,\rho} \stackrel{d}{=} Y_{\alpha,1-\rho}$. This ends the proof of
\eqref{idenity_distribution_01}. 
\qed

\begin{lemma}\label{lemma_division}
Assume that $X, Y, Z$ and $W$ are independent random variables, which satisfy the following four conditions: (i) $X>0$ and $Y>0$ a.s.,  
(ii) $X \stackrel{d}{=} Y$, (iii) $X \times Z \stackrel{d}{=} Y \times W$ and (iv) there exists $\epsilon>0$ such that
$\e[|Z|^s]<\infty$ and $\e[|W|^s]<\infty$ for all $s\in (-\epsilon,\epsilon)$. 
Then $Z \stackrel{d}{=} W$.
\end{lemma}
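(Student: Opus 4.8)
The plan is to pass to logarithms and reduce the claim to the uniqueness theorem for characteristic functions; assumption (iv) will be used to upgrade a purely local identity of Mellin transforms to a global one. First I would record two elementary consequences of (iv). Since $\e[|Z|^{-\epsilon/2}]<\infty$ and $\e[|W|^{-\epsilon/2}]<\infty$, we have $\p(Z=0)=\p(W=0)=0$, so $Z=\mathrm{sign}(Z)\,|Z|$ and $W=\mathrm{sign}(W)\,|W|$ almost surely. Moreover, setting for $\re(s)\in(-\epsilon,\epsilon)$
$$
h_Z^{+}(s):=\e\!\left[|Z|^s\mathbf{1}_{\{Z>0\}}\right],\qquad h_Z^{-}(s):=\e\!\left[|Z|^s\mathbf{1}_{\{Z<0\}}\right],
$$
and similarly $h_W^{\pm}(s)$, a standard domination argument together with Morera's theorem and Fubini shows that $h_Z^{\pm}$ and $h_W^{\pm}$ are holomorphic on the vertical strip $\{s\in\C:|\re(s)|<\epsilon\}$. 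Next, fixing $t\in\r$ and applying the bounded measurable map $x\mapsto |x|^{\i t}\mathbf{1}_{\{x>0\}}$ to both sides of (iii): since $X>0$ almost surely we have $\{XZ>0\}=\{Z>0\}$ up to a null set, and on this set $(XZ)^{\i t}=e^{\i t(\ln X+\ln|Z|)}=X^{\i t}|Z|^{\i t}$ (with $X^{\i t}:=e^{\i t\ln X}$), so independence of $X$ and $Z$ gives $\e[(XZ)^{\i t}\mathbf{1}_{\{XZ>0\}}]=\e[X^{\i t}]\,h_Z^{+}(\i t)$, and likewise for $Y,W$. Combining this with (ii), which gives $\e[X^{\i t}]=\e[Y^{\i t}]$, we obtain
$$
\e[X^{\i t}]\,h_Z^{+}(\i t)=\e[X^{\i t}]\,h_W^{+}(\i t),\qquad t\in\r,
$$
and, using instead $x\mapsto|x|^{\i t}\mathbf{1}_{\{x<0\}}$, the analogous identity $\e[X^{\i t}]\,h_Z^{-}(\i t)=\e[X^{\i t}]\,h_W^{-}(\i t)$.

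The step I expect to be the main obstacle is that $\e[X^{\i t}]$ is merely the characteristic function of the real random variable $\ln X$, so we may assume neither that it is nonvanishing nor that it is analytic, and we can only divide it out near $t=0$. Since $t\mapsto\e[X^{\i t}]$ is continuous and equals $1$ at $t=0$, there is $\delta\in(0,\epsilon)$ with $\e[X^{\i t}]\neq 0$ for $|t|<\delta$; dividing, $h_Z^{+}(\i t)=h_W^{+}(\i t)$ for $|t|<\delta$ \emph{only}. This is precisely where assumption (iv) is essential: $h_Z^{+}$ and $h_W^{+}$ are holomorphic on the connected strip $\{|\re(s)|<\epsilon\}$ and agree on the nondegenerate segment $\{\i t:|t|<\delta\}$, hence by the identity theorem they coincide throughout the strip; in particular $h_Z^{+}(\i t)=h_W^{+}(\i t)$ for all $t\in\r$, and the same argument yields $h_Z^{-}(\i t)=h_W^{-}(\i t)$ for all $t\in\r$.

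Finally I would unwind these two identities. Evaluating at $t=0$ gives $\p(Z>0)=\p(W>0)=:q$ and $\p(Z<0)=\p(W<0)$. If $q>0$, dividing $h_Z^{+}(\i t)=h_W^{+}(\i t)$ by $q$ shows that $\ln|Z|$ conditioned on $\{Z>0\}$ and $\ln|W|$ conditioned on $\{W>0\}$ have the same characteristic function, hence the same law, hence the restrictions of the laws of $Z$ and $W$ to $(0,\infty)$ coincide; the same conclusion holds trivially when $q=0$. Treating the negative parts identically and recalling $\p(Z=0)=\p(W=0)=0$, we conclude that $Z$ and $W$ agree on every Borel subset of $\r$, that is, $Z\stackrel{d}{=}W$.
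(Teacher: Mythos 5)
Your proof is correct and follows essentially the same route as the paper's: multiply out the identity for the restricted Mellin/characteristic functions, divide by the characteristic function of $\ln X$ only near $t=0$ where it is nonzero, and use the analyticity supplied by hypothesis (iv) together with the identity theorem to extend the equality to all of $\r$, concluding by uniqueness of characteristic functions. The only cosmetic differences are that the paper works with the normalized cutoffs $\hat Z,\hat W$ and treats the positive and negative parts in two symmetric cases, while you work with unnormalized restricted transforms and note explicitly that (iv) forces $\p(Z=0)=\p(W=0)=0$.
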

\begin{proof}
Recall that we denote by $\hat \xi$ the cutoff of a random variable $\xi$. Assume that $\p(Z>0)>0$. 
Condition (iii) implies that $\p(Z>0)=\p(W>0)$ and $X \times \hat Z \stackrel{d}{=} Y \times \hat W$.
For $t\in \r$ we denote $f(t)=\e[X^{\i t}]=\e[Y^{\i t}]$ and obtain
$$
f(t) \times \e[\hat Z^{\i t}] = \e[(X\hat Z)^{\i t}] = \e[(Y\hat W)^{\i t}]=f(t) \times \e[\hat W^{\i t}]. 
$$
Since $f(t)$ is continuous (as a characteristic function of $\ln(X)$) and $f(0)=1$ we conclude that for some $\delta$ small enough, the function $f(t)$ is non-zero for $t\in (-\delta,\delta)$. Therefore, we can divide both sides of the 
above identity by $f(t)$ and conclude that $\e[\hat Z^{\i t}]=\e[\hat W^{\i t}]$ for $t\in (-\delta,\delta)$. Condition (iv) implies that both functions
 $\e[\hat Z^{\i t}]$ and $\e[\hat W^{\i t}]$ are analytic in the strip $\im(t) \in (-\epsilon,\epsilon)$, and since they are equal on the set 
 $t\in (-\delta,\delta)$, they must be equal for all $t$ in the strip $\im(t) \in (-\epsilon,\epsilon)$, which implies $\hat Z \stackrel{d}{=}  \hat W$. 
 
 In the case if $\p(Z<0)>0$ we can apply the same argument as above and show that the cutoff of $-Z$ has the same distribution as the cutoff of $-W$. 
\end{proof}

\noindent
{\bf Proof of Corollary \ref{corollary2}:} This result follows at once from Corollary \ref{corollary1}, Lemma \ref{lemma_division}, and the following fact: the identity \eqref{identity_ratio} is true for classical stable random variables $Y_{\alpha,\rho}$ and 
their cutoffs $\hat Y_{\alpha,\rho}$ (see formulas (3.2.3) and (3.2.5) in \cite{Zolotarev1986}). \qed

\vspace{0.25cm}
\noindent
{\bf Proof of Theorem \ref{theorem_series_expansions}:} 
First of all, let us check that the series in \eqref{series_x_greater_than_A} and \eqref{series_x_smaller_than_A}  converge for $x=x^*$.
We recall Stirling's asymptotic formula for the gamma function: For every $\epsilon>0$ 
\begin{equation}\label{Stirling_formula}
\ln (\Gamma(z))=\left(z-\tfrac{1}{2}\right) \ln(z)-z+\tfrac{1}{2}\ln(2\pi)+O_{\epsilon}(z^{-1}), 
\end{equation}
as $|z| \to \infty$, uniformly in the sector $|\arg(z)|<\pi-\epsilon$
Using \eqref{Stirling_formula} and the reflection formula for the gamma function we check that
$$
 \frac{\Gamma(1+\alpha n)}{n!\Gamma(2+(\alpha-1)n)} 
=\frac{1}{\pi}  \frac{\Gamma(1+\alpha n)}{n!} \sin(\pi (\alpha-1) n)  \Gamma(-1+(1-\alpha)n) 
=O\left(n^{-{3/2}} (x^*)^{\alpha n}\right),  
$$
as $n\to +\infty$, which shows that the series in \eqref{series_x_greater_than_A} converges for $x=x^*$ (therefore, it converges uniformly on $x\in [x^*, \infty)$). 
In the same way we check that
$$
\frac{\Gamma\left(1+{n/\alpha}\right)}
{n!\Gamma\left(2+\left({1/\alpha}-1\right)n\right)}   = O\left(n^{-{3/2}} (x^*)^{-n}\right), \;\;\; n\to +\infty,
$$
therefore the series in \eqref{series_x_smaller_than_A} converges for $x=x^*$ (and it converges uniformly for $x\in [-x^*, x^*]$). 

Let us prove identity \eqref{series_x_greater_than_A}.  We recall that $M_{\alpha,\rho}(s)$ is defined by \eqref{def_Mellin_transform}, and 
we have already established that it is equal to the function in the right-hand side 
of \eqref{eq_Mellin_1}. It is clear that $M_{\alpha,\rho}(s)$
has simple poles at points 
$$
s_n:=\alpha n, \; n\ge 1 \;\;\; \textnormal{ and } \;\;\; \hat s_m:=-m, \; m\ge 1. 
$$
The poles at $s_n$ \{respectively, $\hat s_m$\} come from the factor $\Gamma(1-s/\alpha)$ \{respectively, $\Gamma(s)$\} in \eqref{eq_Mellin_1}.
The corresponding residues are given by 
\begin{align}\label{residues_of_M}
&{\textnormal{Res}}(M_{\alpha,\rho}(s) : s=s_n)=\frac{1}{\pi} (-1)^{n}\frac{\Gamma(1+\alpha n)}{n!\Gamma(2+(\alpha-1)n)} 
\sin(n\alpha \rho \pi), \\ \label{residues_of_M2}
&{\textnormal{Res}}(M_{\alpha,\rho}(s)  :  s=\hat s_m)=\frac{1}{\pi} (-1)^{m-1}\frac{\Gamma\left(1+{m/\alpha}\right)}
{m!\Gamma\left(2+\left({1/\alpha}-1\right)m\right)} 
\sin(m\rho \pi).
\end{align}
Using the reflection formula for the Gamma function we check that $M_{\alpha,\rho}(s) \equiv f_1(s) \times f_2(s)$, where
we have defined
$$
f_1(s):=-\frac{1}{\pi} \frac{\sin(\pi \rho s) \sin\left(\pi \left({1/\alpha}-1\right)s\right)}
{\sin\left({\pi s/\alpha}\right)}, \;\;\;
f_2(s):=\frac{\Gamma(s) \Gamma\left(-1-s+{s/\alpha}\right)}{\Gamma\left({s/\alpha}\right)}.
$$

Let us denote $B_r(w):=\{z\in\C \mid |z-w|<r\}$ and $\delta:=\alpha/4$. The function $f_1(s)$ has poles at points
$s=n\alpha$, $n\in {\mathbb Z}$, and it satisfies $f_1(s)=O(\exp(-\pi (1-\rho)|\im(s)|))$ as $\im(s) \to \infty$. This implies
that for some $C_1>0$ we have
\begin{equation}\label{f_1_asymptotics}
|f_1(s)|\leq C_1,\qquad s \notin \bigcup_{n=0}^\infty B_{\delta}(n \alpha),\quad \re(s)\geq0. 
\end{equation}
From Stirling's formula \eqref{Stirling_formula} we find that there exist $C_2>0$ and $C_3>0$ such that 
\begin{equation}\label{f_2_asymptotics}
|f_2(s)|<C_2 |s|^{-{3/2}}(x^*)^{\re(s)}, \;\;\; |s|>C_3, ~\re(s)>0. 
\end{equation}
Formulas \eqref{f_1_asymptotics} and \eqref{f_2_asymptotics} show that $M_{\alpha,\rho}(s)$ is absolutely integrable on the vertical line $\i \r$, therefore we may use the Mellin transform inversion formula
$$
\psi_{\alpha,\rho}(x)=\frac{x^{-1}}{2\pi \i} \int_{\i \r} M_{\alpha,\rho}(s) x^{-s} \d s.
$$
Let us define $b_k:=\alpha(2 k+1)/2$.
Shifting the contour of integration $\i \r \mapsto b_k + \i \r$ we obtain
\begin{equation}\label{shifting_contour}
\psi_{\alpha,\rho}(x)=-\sum\limits_{n=1}^{k} {\textnormal{Res}}(M_{\alpha,\rho}(s) : s=s_n) \times x^{-s_n-1}
+\frac{x^{-1}}{2\pi \i} \int_{b_k+\i \r} M_{\alpha,\rho}(s) x^{-s} \d s.
\end{equation}
Let us denote the integral in the right-hand side of \eqref{shifting_contour} by $I_k(x)$. 
Changing the variable of integration $s=b_k+\i u$ we find 
\begin{equation}\label{estimate_I_n1}
|I_k(x)|= 
\left| \i \int_{\r} M_{\alpha,\rho}(b_k + \i u) x^{-b_k-\i u} \d u \right|
\le x^{-b_k} \int_{\r} |f_1(b_k+\i u)| \times |f_2(b_k+\i u)| \d u.
\end{equation}
 Note that due to our choice of $b_k$, the vertical line $b_k + \i \r$ does not intersect the collection of circles
$\bigcup_{n=0}^\infty B_{\delta}(n \alpha)$, thus the estimate \eqref{f_1_asymptotics} holds true for all $s \in b_k + \i \r$. 
Estimates  \eqref{f_1_asymptotics}, \eqref{f_2_asymptotics} and  \eqref{estimate_I_n1} show that for all $b_k > C_3$  we have
$$
|I_k(x)|\le C_1C_2  \left(\frac{x^*}{x}\right)^{b_k}   \int_{\r}(b_k^2+u^2)^{-{3/4}} \d u,
$$
which implies that $I_k(x)\to 0$ as $k \to +\infty$, provided that  $x \ge x^*$. Combining this statement with 
formulas \eqref{residues_of_M} and \eqref{shifting_contour} gives us the desired series expansion \eqref{series_x_greater_than_A}. 

The proof of \eqref{series_x_smaller_than_A} can be obtained in exactly the same way, except that now we should shift the 
contour of integration in the opposite direction, while taking into account the contribution from the simple poles at points $\hat s_m$. The details of the proof are left to the reader. 
\qed

\begin{lemma}\label{lemma_eqn_fourier_mellin}
Assume that $(\alpha,\rho) \in {\mathcal A}$. For $s>0$ we have
\begin{equation}\label{eqn_fourier_mellin}
\int_0^{\infty} {\mathfrak{f}}_{\alpha,\rho}(z) z^{s-1}\d z=\frac{1}{\alpha} e^{\pi \i s \left( \rho-{1/2} \right)} 
\frac{\Gamma\left({s/\alpha}\right)}{\Gamma\left(2-s+{s/\alpha}\right)}. 
\end{equation}
\end{lemma}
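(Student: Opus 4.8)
\noindent\textbf{Proof plan for Lemma \ref{lemma_eqn_fourier_mellin}.}
The plan is to express the Mellin transform of $\mathfrak{f}_{\alpha,\rho}$ appearing on the left of \eqref{eqn_fourier_mellin} in terms of the Mellin transform $M_{\alpha,\rho}$ of the density $\psi_{\alpha,\rho}$, which is already available in closed form from Theorem \ref{thm_Mellin}, and then to simplify the outcome using only the reflection formula for the Gamma function together with elementary identities for the exponential and sine functions.

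First I would use the symmetry $\psi_{\alpha,\rho}(x)=\psi_{\alpha,1-\rho}(-x)$ to split the characteristic function into its ``positive'' and ``negative'' parts,
\begin{equation*}
\mathfrak{f}_{\alpha,\rho}(z)=\int_0^{\infty}e^{\i zx}\psi_{\alpha,\rho}(x)\,\d x+\int_0^{\infty}e^{-\i zx}\psi_{\alpha,1-\rho}(x)\,\d x .
\end{equation*}
Multiplying by $z^{s-1}$, integrating over $z\in(0,\infty)$, interchanging the order of integration, and using the elementary identity $\int_0^{\infty}z^{s-1}e^{\pm\i zx}\,\d z=\Gamma(s)e^{\pm\i\pi s/2}x^{-s}$ for $x>0$ (which comes from rotating the contour in the integral representation of $\Gamma(s)$), I would arrive at
\begin{equation*}
\int_0^{\infty}\mathfrak{f}_{\alpha,\rho}(z)z^{s-1}\,\d z=\Gamma(s)\Bigl[e^{\i\pi s/2}M_{\alpha,\rho}(-s)+e^{-\i\pi s/2}M_{\alpha,1-\rho}(-s)\Bigr].
\end{equation*}

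Next I would substitute formula \eqref{eq_Mellin_1} for $M_{\alpha,\rho}(-s)$ and $M_{\alpha,1-\rho}(-s)$ (legitimate once $\re(s)<1$, so that $-s\in(-1,\alpha)$). Writing $\sin(\pi\rho s)$ and $\sin(\pi(1-\rho)s)$ in exponential form and cancelling two of the resulting terms, the bracketed combination collapses to $e^{\i\pi s(\rho-\frac12)}\sin(\pi s)$; the surviving numerical prefactor $-\tfrac1\pi\Gamma(s)\Gamma(-s)\sin(\pi s)$ equals $1/s$ by the reflection formula $\Gamma(s)\Gamma(1-s)=\pi/\sin(\pi s)$, and the identity $\Gamma(1+\tfrac s\alpha)/s=\Gamma(\tfrac s\alpha)/\alpha$ then turns the whole expression into the right-hand side of \eqref{eqn_fourier_mellin}.

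The step I expect to be the main obstacle is the rigorous justification of the interchange of integrals, since the integral on the left of \eqref{eqn_fourier_mellin} need not converge absolutely for all $s>0$: the density $\psi_{\alpha,\rho}$ has in general only square-root regularity at the edge of its support (see \cite{BPB1999}), whence $\mathfrak{f}_{\alpha,\rho}(z)=O(|z|^{-3/2})$ as $z\to\infty$. To deal with this I would first prove the identity on a strip $0<\re(s)<\sigma_0$ on which Fubini's theorem clearly applies --- here one uses that $\int_0^{\infty}x^{-s}\psi_{\alpha,\rho}(x)\,\d x=M_{\alpha,\rho}(-s)$ is finite for $\re(s)\in(0,1)$ --- and then extend the identity to all $s>0$ by analytic continuation, interpreting the left-hand side as the conditionally convergent oscillatory integral $\lim_{R\to\infty}\int_0^{R}$. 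An equivalent route is to insert a regularizing factor $e^{-\varepsilon z}$, perform the computation for $\varepsilon>0$ (where absolute convergence is immediate), and then let $\varepsilon\downarrow0$ using dominated convergence.
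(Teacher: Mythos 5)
Your proposal is correct and follows essentially the same route as the paper: split ${\mathfrak{f}}_{\alpha,\rho}$ via the symmetry $\psi_{\alpha,\rho}(x)=\psi_{\alpha,1-\rho}(-x)$, reduce the left-hand side to $\Gamma(s)\bigl[e^{\i\pi s/2}M_{\alpha,\rho}(-s)+e^{-\i\pi s/2}M_{\alpha,1-\rho}(-s)\bigr]$ for $s\in(0,1)$, collapse the trigonometric combination to $e^{\pi\i s(\rho-\frac12)}\sin(\pi s)$, simplify with the reflection formula, and extend to all $s>0$ by analytic continuation, exactly as in the paper's proof. The only difference is that the paper delegates the oscillatory Fourier--Mellin step (the identity $\int_0^{\infty}z^{s-1}e^{\pm\i zx}\d z=\Gamma(s)e^{\pm\i\pi s/2}x^{-s}$ together with the interchange of integrals) to Lemma 1 of \cite{Simon_Profeta}, whereas you justify it directly --- and there your $e^{-\varepsilon z}$ regularization is the argument to use, since the plain Fubini claim on a strip does not hold as stated (the $z$-integral is never absolutely convergent because $|z^{s-1}e^{\i zx}|=z^{\re(s)-1}$ is not integrable on $(0,\infty)$).
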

\begin{proof}
We use the following result (see Lemma 1 in \cite{Simon_Profeta}): {\it Let $s \in (0,1)$ and assume that $X$ is a
random variable such that $\e[|X|^{-s}]<\infty$. Then}
\begin{align*}
\int_0^{\infty} z^{s-1} \e[\cos(zX)] \d z&=\Gamma(s) \cos\left({\pi s/2}\right) \e[|X|^{-s}], \\
\int_0^{\infty} z^{s-1} \e[\sin(zX)] \d z&=\Gamma(s) \sin\left(\pi s/2\right) \e[|X|^{-s}{\textnormal{sign}}(X)].
\end{align*}
Combining the above two identities we obtain the following result: for $s \in (0,1)$ 
\begin{align*}
\int_0^{\infty} {\mathfrak{f}}_{\alpha,\rho}(z) z^{s-1}\d y
&=
\Gamma(s) \cos\left({\pi s/2}\right) 
\left[M_{\alpha,\rho}(-s)+M_{\alpha,1-\rho}(-s)\right]\\
&+
\i \Gamma(s) \sin\left({\pi s/2}\right) 
\left[M_{\alpha,\rho}(-s)-M_{\alpha,1-\rho}(-s)\right],
\end{align*}
where $M_{\alpha,\rho}(s)$ is defined by \eqref{def_Mellin_transform}. The required result
\eqref{eqn_fourier_mellin} follows from the above identity, formula \eqref{eq_Mellin_1} and the following elementary trigonometric identity
\begin{align*}
&\cos\left({\pi s/2}\right) \left[\sin(\pi \rho s)+\sin(\pi (1-\rho)s)\right]+
\i \sin\left({\pi s/2}\right) \left[\sin(\pi \rho s)-\sin(\pi (1-\rho)s)\right]\\&
\qquad \qquad\qquad \qquad\qquad \qquad\qquad =
\sin(\pi s) e^{\pi \i s \left(\rho-1/2 \right)}.
\end{align*}
Extension of \eqref{eqn_fourier_mellin} for all $s>0$ is achieved by analytic continuation, since 
the right-hand side of \eqref{eqn_fourier_mellin} is  analytic in the half-plane $\re(s)>0$. 
\end{proof}

\vspace{0.25cm}
\noindent
{\bf Proof of Theorem \ref{thm_f_alpha_rho_series}:}
The proof uses the Mellin transform inversion technique, and it is very similar to the proof of Theorem \ref{theorem_series_expansions}. 
Therefore, we only sketch the main steps of the proof and we leave all the details to the reader.

Let us denote the function in the right-hand side of \eqref{eqn_fourier_mellin} by $m_{\alpha,\rho}(s)$. Assume that $\alpha \in (0,1)$
and $\rho \in (0,1)$. Using the 
reflection formula for the Gamma function we find that $m_{\alpha,\rho}(s)=f_1(s) \times f_2(s)$, where we have defined
$$
f_1(s):=e^{\pi \i s \left( \rho-1/2 \right)} \frac{\sin(({1/\alpha}-1)\pi s)}{\sin( {\pi s/\alpha})}, \;\;\;
f_2(s):=\frac{\Gamma(-1-({1/\alpha}-1)s)}{\Gamma(1-{s/\alpha})}. 
$$
Using Stirling's asymptotic formula \eqref{Stirling_formula} we check that the function $u\in \r \mapsto m_{\alpha,\rho}(\i u)$ 
converges to zero exponentially fast as $u \to \infty$, thus we can express ${\mathfrak{f}}_{\alpha,\rho}(x)$ as the inverse Mellin transform
\begin{equation}\label{Inv Mellin}
{\mathfrak{f}}_{\alpha,\rho}(z)=\frac{1}{2\pi \i} \int_{1+\i \r} 
m_{\alpha,\rho}(s) z^{-s} \d s. 
\end{equation}
We define $b_k:=\alpha (2k+1)/2$ and  shift the contour of integration in \eqref{Inv Mellin} $1+\i \r \mapsto -b_k + \i \r$. Taking into account the residues of the integrand at points $s=-\alpha n$ (coming from the factor $\Gamma(s/\alpha)$ in \eqref{eqn_fourier_mellin}),
we obtain
\begin{equation}\label{shift_contour_fourier}
{\mathfrak{f}}_{\alpha,\rho}(z)=\sum\limits_{0 \le n \le k} (-1)^n \frac{e^{\pi \i \alpha ( {1/2} - \rho )n}}{n! \Gamma(2+(\alpha-1)n)} z^{\alpha n}+\frac{1}{2\pi \i} \int_{-b_k+\i \r} 
m_{\alpha,\rho}(s) x^{-s} \d s. 
\end{equation}
It is easy to see that for some $C_1>0$ we have 
$$
|f_1(-b_k+\i u)|<C_1 \exp\left(- \pi (1-|\rho-1/2|) |u|\right),
$$
for all $k \ge 0$ and $u\in \r$.
Stirling's asymptotic formula \eqref{Stirling_formula} shows that there exist constants $C_2 \in \r$ and $C_3>0$ such that
\begin{equation*}
\ln(f_2(s))=s \ln(-s)+C_2s+O(1)
\end{equation*}
as $s \to \infty$, uniformly in the half-plane $\re(s)<-C_3$. The above two estimates can be used to show that the integral in the right-hand side of \eqref{shift_contour_fourier} converges to zero as $k\to +\infty$, which ends the proof of 
\eqref{f_alpha_rho_series} for $\alpha \in (0,1)$ and $\rho \in (0,1)$. The extension of the result in the case 
$\rho \in \{0,1\}$ follows by considering the limit of \eqref{f_alpha_rho_series} as $\rho \to 0^+$ or $\rho \to 1^-$. The proof in the case $\alpha \in (1,2]$ can be obtained along the same lines. 
\qed

\section*{Acknowledgements} The authors would like to thank two anonymous referees for their careful reading of the paper and for suggesting several improvements. The research of T. Hasebe was supported by Marie Curie Actions -- International Incoming Fellowships (Project 328112 ICNCP). The research of A. Kuznetsov was supported by the Natural Sciences and Engineering Research Council of Canada.



\begin{thebibliography}{99}

\bibitem{AP2010}
O.~Arizmendi and V.~P\'erez-Abreu.
\newblock On the non-classical infinite divisibility of power semicircle
  distributions.
\newblock {\em Communications on Stochastic Analysis}, 4(2):161 -- 178, 2010.

\bibitem{BN_TH}
O.~E. Barndorff-Nielsen and S.~Thorbj{\o}rnsen.
\newblock A connection between free and classical infinite divisibility.
\newblock {\em Infinite Dimensional Analysis, Quantum Probability and Related
  Topics}, 07(04):573--590, 2004.

\bibitem{BPB1999}
H.~Bercovici, V.~Pata, and P.~Biane.
\newblock Stable laws and domains of attraction in free probability theory.
\newblock {\em Annals of Mathematics}, 149(3):1023 -- 1060, 1999.

\bibitem{BV1993}
H.~Bercovici and D.-V. Voiculescu.
\newblock Free convolution of measures with unbounded support.
\newblock {\em Indiana University Mathematics Journal}, 42(3):733 -- 773, 1993.

\bibitem{Cressie}
N.~Cressie.
\newblock A note on the behaviour of the stable distributions for small index
  $\alpha$.
\newblock {\em Zeitschrift f\"ur Wahrscheinlichkeitstheorie und Verwandte
  Gebiete}, 33(1):61--64, 1975.

\bibitem{D2011}
N.~Demni.
\newblock Kanter random variable and positive free stable distributions.
\newblock {\em Electronic Communications in Probability}, 16:137 -- 149, 2011.

\bibitem{HM2013}
U.~Haagerup and S.~M\"oller.
\newblock The law of large numbers for the free multiplicative convolution.
\newblock In {\em Operator Algebra and Dynamics}, volume~58 of {\em Springer
  Proceedings in Mathematics \& Statistics}, pages 157 -- 186. Springer,
  Berlin, Heidelberg, 2013.

\bibitem{Fox_H}
A.~M. Mathai, R.~K. Saxena, and H.~J. Haubold.
\newblock {\em The $H$-{F}unction: {T}heory and {A}pplications}.
\newblock Springer, 2010.

\bibitem{Pata_1995}
V.~Pata.
\newblock L\'evy type characterization of stable laws for free random
  variables.
\newblock {\em Transactions of the American Mathematical Society}, 347(7):2457
  -- 2472, 1995.

\bibitem{Simon_Profeta}
C.~Profeta and T.~Simon.
\newblock Persistence of integrated stable processes.
\newblock {\em arXiv:1403.1064}, 2014.

\bibitem{Zolotarev1986}
V.~M. Zolotarev.
\newblock {\em One-dimensional stable distributions}, volume~65 of {\em
  Translations of Mathematical Monographs}.
\newblock American Mathematical Society, Providence, RI, 1986.

\end{thebibliography}



\end{document}